\newcommand{\dd}{\mathrm{d}}
\theoremstyle{definition}
\newtheorem*{df*}{Definition}
\theoremstyle{remark}
\newtheorem*{rem*}{Remark}
\numberwithin{equation}{section}
\newcommand{\ddf}{\buildrel\mathrm{def}\over=}
\newtheorem{theorem}{Theorem}
\newtheorem{corollary}{Corollary}
\newtheorem{remark}{Remark}
\newtheorem{proposition}{Proposition}
\newcommand{\eps}{\varepsilon}
\newcommand{\s}{\mathbf}
\begin{document}

\title[The maximum principle]{Isoperimetric functional inequalities via the maximum principle: the exterior differential systems approach}


\author{Paata Ivanisvili}
\thanks{PI is partially supported by the Hausdorff Institute for Mathematics, Bonn, Germany}
\address{Department of Mathematics,  Kent State University, Kent, OH 44240, USA}
\email{ivanishvili.paata@gmail.com}

\author{Alexander Volberg}
\thanks{AV is partially supported by the NSF grants DMS-1265549, DMS-1600065 and by the Hausdorff Institute for Mathematics, Bonn, Germany}
\address{Department of Mathematics,  Michigan State University, East
Lansing, MI 48824, USA}
\email{volberg@math.msu.edu}

\makeatletter
\@namedef{subjclassname@2010}{
  \textup{2010} Mathematics Subject Classification}
\makeatother

\subjclass[2010]{42B20, 42B35, 47A30}



%
%

\keywords{log-Sobolev inequality, Poincar\'e inequality, Bobkov's inequality, Gaussian isoperimetry, semigroups, maximum principle, Monge--Amp\`ere, exterior differential systems, backwards heat equation, (B) theorem}

\begin{abstract}
The goal of this note is to give the unified approach to the solutions of a class of isoperimetric problems by relating them to the exterior differential systems studied by R.~Bryant and P.~Griffiths.
\end{abstract}

\date{}
\maketitle

\setcounter{equation}{0}
\setcounter{theorem}{0}

\section{Introduction: a function and its gradient}

In this note we list several classical by now isoperimetric inequalities which can be proved in a unified way. This unified approach reduces them to the so-called exterior differential systems studied by Robert Bryant and Phillip Griffiths. To the best of our knowledge, this is the first article where this connection is made.

Let $d\gamma(x)$ be the standard $n$-dimensional Gaussian measure $d\gamma(x)=\frac{1}{\sqrt{(2\pi)^{n}}} e^{-\frac{|x|^{2}}{2}} dx$.  Set  $\Omega \subset \mathbb{R}$ to be a  bounded closed  interval and let $\mathbb{R}^{+} :=  \{ x \in \mathbb{R} : x \geq 0\}$.  By symbol $C^{\infty}(\mathbb{R}^{n};\; \Omega)$ we denote the smooth,  functions on $\mathbb{R}^{n}$ with values in $\Omega$.   We prove the following theorem:

\begin{theorem}
\label{mth}
If  a real valued function $M(x,y)$ is such that  $M(x,\sqrt{y}) \in C^{2}(\Omega\times \mathbb{R}_{+})$ and it  satisfies the differential inequalities 
\begin{align} 
\label{mdiff}
\begin{pmatrix}
M_{xx}+\frac{M_{y}}{y} & M_{xy}\\
M_{xy} & M_{yy}
\end{pmatrix} \leq 0
\end{align}
then 
\begin{align}\label{gcase}
\int_{\mathbb{R}^{n}} M(f,\| \nabla f\| )d\gamma \leq M\left(\int_{\mathbb{R}^{n}} f d\gamma ,0\right) \quad \text{for all} \quad f\in C^{\infty}(\mathbb{R}^{n};\; \Omega).
\end{align}
\end{theorem}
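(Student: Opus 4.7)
The plan is to use the Ornstein--Uhlenbeck semigroup $P_t$, whose generator $L = \Delta - x\cdot\nabla$ is self-adjoint on $L^2(d\gamma)$. Setting $u := P_t f$ and
\[
b(t) := \int_{\R^n} M(u,|\nabla u|)\,d\gamma,
\]
the identities $b(0) = \int M(f,|\nabla f|)\,d\gamma$ and $b(\infty) = M\bigl(\int f\,d\gamma,\,0\bigr)$ are immediate from $P_0 f = f$, $P_t f \to \int f\,d\gamma$, and $|\nabla P_t f| \to 0$ as $t\to\infty$. Thus \eqref{gcase} reduces to monotonicity: $b'(t)\geq 0$. Writing $G := M(u,|\nabla u|)$ and using $\int LG\,d\gamma = 0$, one has $b'(t) = \int(\partial_t G - LG)\,d\gamma$, and in fact I aim for the stronger pointwise bound $\partial_t G - LG \geq 0$.

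For the pointwise computation I rely on $\partial_t u = Lu$, the commutator $\nabla Lu = L\nabla u - \nabla u$, and the Bochner identity $L|\nabla u|^2 = 2\nabla u\cdot L\nabla u + 2|\Hess u|^2$. These combine into $\partial_t|\nabla u|^2 = L|\nabla u|^2 - 2|\nabla u|^2 - 2|\Hess u|^2$. Chain--ruling through $M$ and writing $h := |\nabla u|$, a direct computation yields
\[
\partial_t G - LG = -\Big(M_{xx}+\tfrac{M_y}{h}\Big)h^2 - 2M_{xy}\,\nabla u\cdot\nabla h - M_{yy}|\nabla h|^2 - \tfrac{M_y}{h}\bigl(|\Hess u|^2 - |\nabla h|^2\bigr).
\]
The apparent $1/h$ singularity is removable because $M(x,\sqrt y)\in C^2$; this becomes transparent after the substitution $V = h^2$ and $\tilde M(x,V) := M(x,\sqrt V)$, which recasts the right-hand side smoothly in $V$ and in the entries of $\Hess u$.

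To prove pointwise nonnegativity, note first that $M(x,\sqrt y)\in C^2$ forces $M_y(x,0) = 0$, so the diagonal bound $M_{yy}\leq 0$ from \eqref{mdiff} implies $M_y\leq 0$ on $\R_+$ by monotonicity. Combined with the Kato inequality $|\Hess u|^2 \geq |\nabla h|^2$, the last term above is nonnegative and can be dropped. Next, choose an orthonormal frame with $\nabla u = h e_1$ and set $A := \Hess u$; then $\nabla u\cdot\nabla h = hA_{11}$ and $|\nabla h|^2 = \sum_i A_{i1}^2\geq A_{11}^2$, so since $-M_{yy}\geq 0$ one also has $-M_{yy}|\nabla h|^2 \geq -M_{yy}A_{11}^2$. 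Dropping these two nonnegative remainders produces the lower bound
\[
-\big[(M_{xx}+M_y/h)h^2 + 2M_{xy}hA_{11} + M_{yy}A_{11}^2\big],
\]
which is precisely the negation of the quadratic form in \eqref{mdiff} applied to the vector $(h,A_{11})^\top$, and hence $\geq 0$ by hypothesis.

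The main subtlety is that $\Hess u$ carries $\binom{n+1}{2}$ scalar degrees of freedom while \eqref{mdiff} is only a $2\times 2$ condition. The argument resolves this by isolating the scalar $A_{11}$, on which \eqref{mdiff} acts directly, and absorbing every other Hessian component into the two nonnegative remainders: one via $|\Hess u|^2\geq |\nabla h|^2$ (Kato) together with $M_y\leq 0$, the other via $|\nabla h|^2\geq A_{11}^2$ together with $M_{yy}\leq 0$. Smoothing of $P_t f$ for $t>0$, the endpoint convergence of $b(t)$, and the harmlessness of the set $\{\nabla u = 0\}$ are routine under the standing assumption $f\in C^\infty(\R^n;\Omega)$ with $\Omega$ bounded.
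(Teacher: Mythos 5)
Your proof is correct, and at the top level it follows the same strategy as the paper: interpolate along the Ornstein--Uhlenbeck semigroup and reduce everything to a pointwise sign condition on the drift $(\partial_t - L)$ of $M(P_tf,|\nabla P_t f|)$. But the inner mechanism you use to verify that sign is genuinely different. The paper proves a more abstract result (Theorem~\ref{absth}) for general Bellman functions $B(D^{\alpha}{\bf f})$ and obtains the pointwise drift bound by exhibiting an explicit $(n+1)\times(n+1)$ matrix $W$ and a factorization $W=S\bigl(W_{1}+\tfrac{M_y\sqrt{R}}{\|\nabla f\|}W_{2}\bigr)S$, showing $W_{1}\le 0$ (where the $2\times 2$ condition \eqref{mdiff} enters) and $W_{2}\ge 0$, hence $W\le 0$ and $\mathrm{Tr}(W\Gamma)\le 0$ because $\Gamma\ge0$. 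You instead compute the drift directly via the commutator $\nabla L u = L\nabla u - \nabla u$ and the Bochner identity, land on the explicit scalar expression
\[
-\Bigl(M_{xx}+\tfrac{M_y}{h}\Bigr)h^2 - 2M_{xy}\,\nabla u\cdot\nabla h - M_{yy}|\nabla h|^2 - \tfrac{M_y}{h}\bigl(|\Hess u|^2 - |\nabla h|^2\bigr),
\]
and discard two manifestly nonnegative remainders --- one via Kato's inequality $|\Hess u|^2\ge|\nabla|\nabla u||^2$ combined with $M_y\le0$, the other via $|\nabla h|^2\ge A_{11}^2$ combined with $M_{yy}\le 0$ --- to reach the quadratic form in \eqref{mdiff} evaluated at $(h,A_{11})$. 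This is cleaner and more geometric: the paper's matrix algebra is replaced by two named inequalities and a frame choice, and the reduction from the $\binom{n+1}{2}$ Hessian degrees of freedom to the single scalar $A_{11}$ is made transparent. What you lose is the modularity of the paper's Theorem~\ref{absth}, which is deliberately stated for arbitrary $B$ and arbitrary multi-indices $\alpha$ and is reused later (e.g.\ for Corollary~\ref{erti} and the Houdr\'e--Kagan inequality); your argument is tailored to the first-order Bellman function $M(f,\|\nabla f\|)$. One small remark: you derive $M_y(x,0)=0$ from $M(x,\sqrt{y})\in C^2$, whereas the paper derives the weaker $M_y(x,0)\le 0$ directly from the first diagonal entry of \eqref{mdiff}; both then propagate to $M_y\le 0$ via $M_{yy}\le 0$, so the difference is immaterial.
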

 
 One can obtain the similar result for uniformly log-concave  probability measures, and the short way to see this  is based on the mass transportation argument.
  In fact, let $d\mu=e^{-U(x)}dx$ be a probability measure such that $U(x)$ is smooth and  $\mathrm{Hess}\, U \geq R \cdot Id$ for some $R>0$. By the result of Caffarelli (see \cite{caf}) there exists  a Brenier map $T = \nabla \phi$ for some convex function $\phi$ such that $T$ pushes forward  $d\gamma$ onto $d\mu$, moreover 
$0 \leq \mathrm{Hess}\, \phi  \leq \frac{1}{\sqrt{R}}\cdot Id$. We apply  (\ref{mdiff}) to $f(x)=g(\nabla \phi (x))$ and use the fact $M_{y} \leq 0$ which follows from (\ref{mdiff}). Since $\| \nabla f(x) \| = \| \mathrm{Hess}\, \phi(x) \, \nabla g(\nabla \phi ) \| \leq \frac{1}{\sqrt{R}} \| \nabla g(\nabla \phi) \|$ we obtain:

\begin{corollary}
\label{sled}
If $M(x,y)$ satisfies $M(x,\sqrt{y})\in C^{2}(\Omega\times \mathbb{R}_{+})$ and \textup{(\ref{mdiff})} then for any  $g \in C^{\infty}(\mathbb{R}^{n};\; \Omega)$ we have 
\begin{align}
\label{corlog}
\int_{\mathbb{R}^{n}} M\left(g, \frac{\|\nabla g\|}{\sqrt{R}}\right) d\mu \leq M\left( \int_{\mathbb{R}^{n}} g d\mu, 0\right), 
\end{align}
where $d\mu = e^{-U(x)}dx$ is a probability measure such that $\mathrm{Hess}\,  U(x) \geq R \cdot Id$. 
\end{corollary}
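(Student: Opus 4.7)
The plan is to reduce Corollary~\ref{sled} to Theorem~\ref{mth} by transporting the log-concave measure $d\mu$ back onto the Gaussian $d\gamma$, as outlined in the paragraph preceding the statement. After applying Theorem~\ref{mth} in the Gaussian setting to a suitable composition, monotonicity of $M$ in its second argument lets us absorb the loss incurred by the chain-rule bound on gradients.

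First, I would invoke Caffarelli's contraction theorem \cite{caf} to produce a convex potential $\phi$ such that $T := \nabla \phi$ pushes $d\gamma$ forward to $d\mu$ and $0 \leq \mathrm{Hess}\, \phi \leq \frac{1}{\sqrt{R}} \cdot Id$. Given $g \in C^{\infty}(\mathbb{R}^{n};\Omega)$, I set $f(x) := g(\nabla \phi(x))$, which is still $\Omega$-valued. The chain rule gives $\nabla f(x) = \mathrm{Hess}\, \phi(x)\, \nabla g(\nabla \phi(x))$, and the operator-norm bound on $\mathrm{Hess}\, \phi$ yields the pointwise estimate
\[
\|\nabla f(x)\| \leq \frac{1}{\sqrt{R}} \|\nabla g(\nabla \phi(x))\|.
\]

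Second, I would verify that $M_{y} \leq 0$ on $\Omega \times \mathbb{R}^{+}$. The hypothesis $M(x,\sqrt{y}) \in C^{2}(\Omega \times \mathbb{R}^{+})$ forces the function $\partial_{y}[M(x,\sqrt{y})] = M_{y}(x,\sqrt{y})/(2\sqrt{y})$ to extend continuously to $y = 0$, which requires $M_{y}(x,0) = 0$. Combined with the $(2,2)$-entry of (\ref{mdiff}), namely $M_{yy} \leq 0$, integration in $y$ then yields $M_{y}(x,y) \leq 0$ for all $y \geq 0$.

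Third and finally, applying Theorem~\ref{mth} to $f$ gives
\[
\int_{\mathbb{R}^{n}} M(f,\|\nabla f\|)\, d\gamma \;\leq\; M\!\left(\int f\, d\gamma,\; 0\right).
\]
The monotonicity from the previous step together with the pointwise gradient bound supplies the pointwise inequality $M\!\left(f(x), \tfrac{1}{\sqrt{R}}\|\nabla g(\nabla \phi(x))\|\right) \leq M(f(x),\|\nabla f(x)\|)$; integrating this against $d\gamma$ and invoking the pushforward identity $T_{\#} d\gamma = d\mu$ rewrites both sides as integrals against $d\mu$ and delivers (\ref{corlog}). I expect the main obstacle to be regularity-theoretic rather than conceptual: under the hypothesis $\mathrm{Hess}\, U \geq R \cdot Id$ alone, Caffarelli's map $\phi$ is only $C^{1,\alpha}$ in general, so rigorously applying the chain rule and Theorem~\ref{mth} to the composition $f = g \circ \nabla \phi$ may require an approximation argument (for example, mollifying $U$ and passing to the limit after establishing the inequality for each smoothed measure).
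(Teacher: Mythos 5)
Your proposal is correct and reproduces, nearly verbatim, the ``short way'' that the paper itself sketches in the paragraph immediately preceding Corollary~\ref{sled}: invoke Caffarelli's contraction estimate $0\leq \mathrm{Hess}\,\phi\leq R^{-1/2}\cdot Id$, set $f=g\circ\nabla\phi$, use the chain-rule bound $\|\nabla f\|\leq R^{-1/2}\|\nabla g(\nabla\phi)\|$ together with the monotonicity $M_y\leq 0$, and then push forward. Your derivation of $M_y\leq 0$ is slightly different from the paper's (you argue $M_y(x,0)=0$ from the $C^2$-regularity of $M(x,\sqrt{y})$, whereas the paper multiplies the $(1,1)$ entry of (\ref{mdiff}) by $y$ and lets $y\to 0$ to get $M_y(x,0)\leq 0$), but both combine with $M_{yy}\leq 0$ to the same effect, and your version is, if anything, cleaner. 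One thing worth knowing: the paper explicitly flags that its actual proof of Corollary~\ref{sled} is the self-contained semigroup argument in Section~\ref{several}, which bypasses mass transportation entirely --- it proves Theorem~\ref{absth}, then applies it with the Bellman function $B(u_0,\ldots,u_n)=M\bigl(u_0,\sqrt{(u_1^2+\cdots+u_n^2)/R}\,\bigr)$, absorbing the $\sqrt R$ through the estimate $\nabla f\,(\mathrm{Hess}\,U)(\nabla f)^T\geq R\|\nabla f\|^2$ in the commutator term. That route directly yields the log-concave statement and avoids the regularity-of-$\nabla\phi$ concern you correctly raised at the end; your approach, by contrast, reduces the general case to the Gaussian one at the cost of appealing to Caffarelli's theorem and, as you note, possibly an approximation argument.
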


\bigskip 

In Section~\ref{appl} we present applications of the functional inequality (\ref{corlog}). In Section~\ref{several} we  prove a theorem about  equivalence of some functional inequalities and partial differential inequalities. Corollary~\ref{sled}  is just a consequence of this result. We will notice that our proof of Corollary~\ref{sled}  for general log-concave measure will not differ  from the case of Gaussian measures and it will be completely self-contained (it will not need  the mass transportation argument). 

 In Section~\ref{ets} we describe solutions of (\ref{mdiff}) (in  the important case for us  when the determinant of the matrix in (\ref{mdiff}) is zero) by reducing it to the exterior differential system (EDS) studied by R.~Bryant and P.~Griffiths.  
 
 This allows us to linearize the underlying non-linear PDE that appeared by the requirement of  determinant of the matrix in (\ref{mdiff}) to vanish. In Section~\ref{oned} we investigate  one dimensional case of the results obtained in Section~\ref{several}, and in Section~\ref{further} we present further applications. In particular, we sharpen Beckner--Sobolev inequality (already sharp of course),
 and we show other examples of new isoperimetric inequalities, which one obtains through EDS method.
 
\subsection*{Acknowledgement} We are very grateful to Robert Bryant from whom we learned how to solve an important for our goals non-linear PDE (see~\cite{rob3}).    In Section~\ref{goback} this allows us to explain how one could find the right functions $M(x,y)$ for all the applications mentioned in Section~\ref{appl}, and how to find new functions $M$ each responsible for a particular isoperimetric inequality.

\subsection{A unified approach to classical inequalities via one and the same PDE}
\label{appl}

In this section we list classical isoperimetric inequalities that can be obtained by choosing different solution of the one and the same PDE
\begin{equation}
\label{MA1}
y(M_{xx}M_{yy}- M_{xy}^2)  + M_y M_{yy}=0
\end{equation}
corresponding to different initial values at $y=0$. In the next sections we will show how exterior differential systems (EDS) method allows us to reduce it to a linear PDE, and thus match this classical isoperimetric inequalities with interesting solution of a linear PDE that happened to be just a reverse heat equation.

Then later, starting with subsection \ref{32}, we show that one can choose other interesting solutions of \eqref{MA1}, and, in its turn, this translates to new isoperimetric inequalities. In particular, we will show an instance when Beckner--Sobolev inequality can be further sharpened in an ultimate way.

\subsubsection{Log-Sobolev inequalities: entropy estimates}
Log-Sobolev inequality of Gross (see~\cite{Gross}) states that 
\begin{align}\label{gr}
\int_{\mathbb{R}^{n}} |f|^{2} \ln |f|^{2} d\gamma -\left( \int_{\mathbb{R}^{n}} |f|^{2} d\gamma \right) \ln \left( \int_{\mathbb{R}^{n}} |f|^{2} d\gamma\right) \leq 2 \int_{\mathbb{R}^{n}} \| \nabla f \|^{2}d\gamma 
\end{align}
whenever the right hand side of (\ref{gr}) is well-defined and finite for complex-valued $f$. This implies that if $f$ and $\|\nabla f\|$ are in $L^{2}(d\gamma)$ then $f$ is in the Orlicz space $L^{2}\ln L$.  A proof of Gross uses {\em two-point inequality}  which by central limit theorem establishes  hypercontractivity of the Ornstein--Uhlenbeck semigroup $\| e^{t(\Delta - x\cdot \nabla)}\|_{L^{p}(d\gamma)\to L^{q}(d\gamma)}\leq 1$ for all $t\geq 0$ such that $e^{-2t}\leq \frac{p-1}{q-1}$. Then as a corollary differentiating this estimate at point $t=0$ for $q=2$ one obtains (\ref{gr}).  Earlier than Gross similar {\em two-point inequality} was proved by Aline Bonami (see~\cite{abo1,abo2}). For more on  {\em two-point inequalities}  we refer the reader to \cite{RO}.  For the simple proof of hypercontractivity of Ornstein--Uhlenbeck semigroup we refer the reader to \cite{pivo, ledoux}, and also to earlier works \cite{Hu, Mone}.  Bakry and Emery \cite{BE} extended the inequality for log-concave measures. Namely the inequality 
\begin{align}\label{logsob}
\int_{\mathbb{R}^{n}} f^{2} \ln f^{2} d\mu -\left(\int_{\mathbb{R}^{n}} f^{2} d\mu \right) \ln \left( \int_{\mathbb{R}^{n}} f^{2} d\mu \right) \leq \frac{2}{R}\int_{\mathbb{R}^{n}} \| \nabla f\|^{2} d\mu
\end{align}
holds for a bounded real-valued $f\in C^{1}$ and a log-concave probability measure $d\mu=e^{-U(x)}dx$ such that $\mathrm{Hess}\, U(x) \geq R\cdot Id$.  For further remarks we refer the reader to \cite{BGL}. 

\bigskip

\emph{Proof of \textup{(\ref{logsob})}}: Take 
\begin{align}\label{f1}
M(x,y)=x \ln x - \frac{y^{2}}{2 x}, \quad x >0 \quad \text{and} \quad y\geq 0. 
\end{align}

We have 
\begin{align}\label{logM}
\begin{pmatrix}
M_{xx}+\frac{M_{y}}{y} & M_{xy}\\
M_{xy} & M_{yy}
\end{pmatrix}
=\begin{pmatrix}
-\frac{y^{2}}{x^{3}} & \frac{y}{x^{2}}\\
\frac{y}{x^{2}} & -\frac{1}{x}
\end{pmatrix}\leq 0.
\end{align}
By Corollary~\ref{corlog} we obtain
\begin{align}\label{Blogsob}
\int_{\mathbb{R}^{n}}\left(  g \ln g - \frac{1}{2R} \frac{\| \nabla g\|^{2}}{g}\right) \,  d\mu \leq \left(\int_{\mathbb{R}^{n}} g \,d\mu \right) \ln \left( \int_{\mathbb{R}^{n}} g \,d\mu\right).
\end{align}
 Taking $g=f^{2}$ for positive $f$  and   rearranging terms in (\ref{Blogsob}) we arrive at   (\ref{logsob}).

$\hfill \square$ 

\begin{remark}
The proof we just presented has an obstacle:  
$M(x,\sqrt{y})\notin C^{2}(\mathbb{R}_{+}\times \mathbb{R}_{+})$. In order to avoid this obstacle one has to consider $M^{\eps}(x,y):=M(x+\eps,y)$ for some $\eps >0$. Then surely $M^{\eps}(x,y)$ will satisfies (\ref{mdiff}), what is more  $M^{\eps}(x,\sqrt{y})\in  C^{2}(\mathbb{R}_{+}\times \mathbb{R}_{+})$ and we can repeat the same proof as above for $M^{\eps}(x,y)$. Finally, we just send $\eps \to 0$ assuming that $\int f^{2} d\mu \neq 0$ and we obtain the desired estimate. We should use the same idea in the applications presented below. 
\end{remark}
\bigskip

\bigskip

\subsubsection{Bobkov's inequality: Gaussian isoperimetry}
In \cite{bob3} Bobkov obtained the following functional version of Gaussian isoperimetry. Let $\Phi(x) = \frac{1}{\sqrt{2 \pi }}\int_{-\infty}^{x} e^{-x^{2}/2}dx$, and let $\Phi'(x)$ be a derivative of $\Phi$. Set $I(x) := \Phi'(\Phi^{-1}(x))$.  Then for any locally Lipschitz function $f : \mathbb{R}^{n} \to [0,1],$ we have 
\begin{align}\label{GI}
I\left(\int_{\mathbb{R}^{n}} f d\mu \right) \leq \int_{\mathbb{R}^{n}} \sqrt{I^{2}(f)+\frac{\|\nabla f\|^{2}}{R}} d\mu 
\end{align}
where $d\mu=e^{-U(x)}dx$ is a log-concave probability measure such that $\mathrm{Hess}\, U \geq R \cdot Id$.   Bobkov's proof uses   a {\em two-point inequality:} for all $0 \leq a, b \leq 1$ we have 
\begin{align}\label{twopp}
I\left(\frac{a+b}{2} \right) \leq\frac{1}{2}  \sqrt{I^{2}(a)+\left| \frac{a-b}{2}\right|^{2}}+  \frac{1}{2}  \sqrt{I^{2}(b)+\left| \frac{a-b}{2}\right|^{2}}.
\end{align}
Iterating (\ref{twopp}) appropriately and using central limit theorem Bobkov obtained (\ref{GI}) for the Gaussian measures. By the mass transportation argument one immediately  obtains (\ref{GI}) for uniformly log-concave measures. Notice that $I(0)=I(1)=0$.  Testing (\ref{GI}) for $d\mu =d\gamma$ and $f(x) = \mathbbm{1}_{A}$   where $A$ is a Borel subset of $\mathbb{R}^{n}$ one obtains  Gaussian isoperimetry:  for any Borel measurable set $A \subset \mathbb{R}^{n}$ 
\begin{align}\label{gii}
\gamma^{+}(A) \geq \Phi'(\Phi^{-1}(\gamma(A)))\quad \text{where} \quad \gamma^{+}(A) := \liminf_{\varepsilon \to 0} \frac{\gamma(A_{\varepsilon})-\gamma(A)}{\varepsilon}
\end{align}
 denotes  Gaussian perimeter of $A$, here $A_{\eps}=\{x \in \mathbb{R}^{n} : \mathrm{dist}_{\mathbb{R}^{n}}(A,x)< \varepsilon \}$. For further remarks on (\ref{GI}) see \cite{Ba-L}. Gaussian isoperimetry (\ref{gii}) can be derived also from Ehrhad's inequality (see for example \cite{pivo}). 

\bigskip 

{\em Proof of \textup{(\ref{GI})}}. Take 
\begin{align}\label{f3}
M(x,y)=-\sqrt{I^{2}(x)+y^{2}} \quad \text{where}  \quad x \in [0,1], \quad y \geq 0. 
\end{align}
We have 

\begin{align}\label{smat}
\begin{pmatrix}
M_{xx}+\frac{M_{y}}{y} & M_{xy}\\
M_{xy} & M_{yy}
\end{pmatrix}
=\begin{pmatrix}
-\frac{(I'(x))^{2}y^{2}}{(I^{2}(x)+y^{2})^{3/2}} - \frac{I(x)I''(x)+1}{\sqrt{I^{2}(x)+y^{2}}} & y\frac{I(x)I'(x)}{(I^{2}(x)+y^{2})^{3/2}}\\
y\frac{I(x)I'(x)}{(I^{2}(x)+y^{2})^{3/2}} & -\frac{I^{2}(x)}{(I^{2}(x)+y^{2})^{3/2}}
\end{pmatrix}.
\end{align}
Notice that $I''(x)I(x)+1=0$ therefore (\ref{smat}) is negative semidefinite. So by Corollary~\ref{sled} we obtain 
\begin{align}\label{sheb}
\int_{\mathbb{R}^{n}}-\sqrt{I^{2}(f)+\frac{\| \nabla f\|^{2}}{R}} d\mu \leq -I\left( \int_{\mathbb{R}^{n}} f d\mu \right)
\end{align}
rearranging terms in (\ref{sheb}) we obtain (\ref{GI}) for differentiable $f : \mathbb{R}^{n} \to [0,1]$. Notice that (\ref{sheb}) still holds if $I''(x)I(x)+1\geq 0$ for arbitrary smooth $I(x)$. 

$\hfill \square$

\bigskip



\subsubsection{Poincar\'e inequality and spectral gap.}

Classical Poincar\'e inequality for the Gaussian measure obtained by J.~Nash~\cite{jnash} (see p. 941) states that 
\begin{align}\label{nas}
\int_{\mathbb{R}^{n}} f^{2} d\gamma - \left(\int_{\mathbb{R}^{n}} f d\gamma \right)^{2} \leq \int_{\mathbb{R}^{n}}\| \nabla f \|^{2}d\gamma.
\end{align}
The inequality also says that the spectral gap i.e. the first nontrivial eigenvalue of the self-adjoint positive operator $L=-\Delta + x \cdot \nabla$  in $L^{2}(\mathbb{R}^{n},d\gamma)$ is bounded from  below by $1$.  If $d\mu=e^{-U(x)}dx$ is a probability measure such that   $\mathrm{Hess}\, U \geq R \cdot Id$ then we have 
\begin{align}\label{poin}
\int_{\mathbb{R}^{n}} g^{2} d\mu - \left(\int_{\mathbb{R}^{n}} g d\mu \right)^{2} \leq \frac{1}{R} \int_{\mathbb{R}^{n}}\| \nabla g \|^{2}d\mu.
\end{align}
It is a folklore that  inequality (\ref{poin}), besides of mass transportation argument, follows from the  log-Sobolev inequality (\ref{logsob}):  apply (\ref{logsob}) to the function $f(x)=1+\varepsilon g(x)$ where $\int g d\mu=0$, and send $\varepsilon \to 0$. Then the left hand side of (\ref{logsob}) is $2 \varepsilon^{2} \int g^{2} d\mu + o(\varepsilon^{2})$ whereas  the right hand side of (\ref{logsob}) is $\frac{2\varepsilon^{2}}{R}\int \| \nabla g\|^{2} d\mu$. This gives (\ref{poin}).  
In \cite{BL1} Brascamp and Lieb  obtained the improvement of (\ref{poin}):  instead of $\frac{\| \nabla  g\|^{2}}{R}$ one can put $\langle (\mathrm{Hess} U)^{-1} \; \nabla g, \nabla g \rangle$ in the right hand side of (\ref{poin}), where  we assume that $\mathrm{Hess}\,U$ is just positive. For a simple proof of this improvement  we refer the reader to \cite{CCE} (see also~\cite{BoL} by using Prekopa--Leindler inequality). More subtle result of Bobkov \cite{Bob1} in this direction says that for any log-concave probability measure $d\mu=e^{-U(x)}dx$ one can put $ K \|x-\int x d\mu \|_{L^{2}(d\mu)}^{2} \| \nabla g\|^{2}$ instead of $\frac{\| \nabla g \|^{2}}{R}$ for some universal constant $K>0$. This implies that nonnegative operator $L=-\Delta+\nabla U \cdot x$ has a spectral gap. 

In \cite{beck2} Beckner found an inequality which interpolates in a sharp way between Poincar\'e inequality and log-Sobolev inequality. The inequality was obtained for Gaussian measures but, again,  by mass transportation argument it can be easily translated to a log-concave probability measure. Beckner--Sobolev inequality states  that for $f \in L^{2}(d\mu)$  and $1 \leq p \leq 2$ we have 
\begin{align}\label{bc}
\int |f|^{2} d\mu  - \left( \int |f|^{p}\right)^{2/p} \leq \frac{(2-p)}{R} \int_{\mathbb{R}^{n}} \|\nabla f\|^{2} d\mu
\end{align}
where $d\mu=e^{-U(x)}dx$ is a probability measure such that $\mathrm{Hess}\, U\geq R \cdot Id$. Case $p=1$ gives Poincar\'e inequality (\ref{poin}) and case $p\to 2$ after dividing  (\ref{bc}) by $2-p$ gives (\ref{logsob}). Beckner--Sobolev inequality was studied for different measures in \cite{Lat22}.

\bigskip 

\emph{Proof of \textup{(\ref{bc})}}: Take
 \begin{align}\label{f2}
 M(x,y)=x^{\frac{2}{p}}-\frac{2-p}{p^{2}} x^{\frac{2}{p}-2}y^{2} \quad \text{where} \quad x,y\geq 0\quad  1\leq p \leq 2. 
 \end{align}
 Notice that 
\begin{align}\label{bcM}
\begin{pmatrix}
M_{xx}+\frac{M_{y}}{y} & M_{xy}\\
M_{xy} & M_{yy}
\end{pmatrix}
=\begin{pmatrix}
-\frac{2(2-p)(1-p)(2-3p)x^{\frac{2}{p}-4}y^{2}}{p^{4}} & -\frac{4(2-p)(1-p)x^{\frac{2}{p}-3}y}{p^{3}}\\
-\frac{4(2-p)(1-p)x^{\frac{2}{p}-3}y}{p^{3}} & -\frac{4(2-p)x^{\frac{2}{p}-2}}{p^{2}}
\end{pmatrix}\leq 0.
\end{align}
By Corollary~\ref{sled} we have 
\begin{align}\label{beckD}
\int_{\mathbb{R}^{n}} g^{\frac{2}{p}}-\frac{2-p}{p^{2}}g^{\frac{2}{p}-2}\frac{\|\nabla g\|^{2}}{R} \;d\mu \leq \left(\int_{\mathbb{R}^{n}} g d\mu \right)^{\frac{2}{p}}
\end{align} 
for positive (in fact nonnegative) functions $g$. Now set  $g=|f|^{p}$, and notice that $\| \nabla |f| \| \leq \| \nabla f \|$. After rearranging terms in (\ref{beckD}) we obtain (\ref{bc}).

$\hfill \square$

\subsubsection{3/2 function}
\label{32}
Beckner's inequality (\ref{bc}) can be rewritten in an equivalent form 
\begin{align}\label{bec1}
\int_{\mathbb{R}^{n}} f^{p} d\gamma - \left( \int_{\mathbb{R}^{n}} f d\gamma \right)^{p} \leq \frac{p(p-1)}{2}\int_{\mathbb{R}^{n}} f^{p-2} \|\nabla  f\|^{2}d\gamma, \quad p \in [1,2].
\end{align}
In fact inequality (\ref{bec1}) can be essentially improved for $p\in (1,2)$. We will illustrate the improvement in the case $p=3/2$ and for the general case we should refer the reader to our recent paper \cite{PIVO5} which is based on the application of Theorem~\ref{mth}. 

The following inequality valid for all smooth bounded nonnegative $f$ was proved in our recent paper \cite{PIVO5}:
\begin{align}\label{bec2}
&\int_{\mathbb{R}^{n}} f^{3/2} d\gamma - \left( \int_{\mathbb{R}^{n}} f d\gamma \right)^{3/2} \leq \nonumber \\
&\int_{\mathbb{R}^{n}} \left(f^{3/2}-\frac{1}{\sqrt{2}}(2f - \sqrt{f^{2}+\|\nabla f\|^{2}})\sqrt{f+\sqrt{f^{2}+\|\nabla f\|^{2}}} \right)d\gamma.
\end{align}
Inequality (\ref{bec2}) improves Beckner's bound (\ref{bec1}) for $p=3/2$. Indeed, notice that we have the following \emph{pointwise} inequality
\begin{align}\label{imp1}
x^{3/2}-\frac{1}{\sqrt{2}}\left(2x-\sqrt{x^{2}+y^{2}} \right)\sqrt{x+\sqrt{x^{2}+y^{2}}} \leq \frac{3}{8} x^{-1/2}y^{2}, \quad x,y \geq 0,
\end{align}
which follows from the homogeneity, i.e., take $x=1$. 
By plugging $f$ for $x$, $|\nabla f|$ for $y$ and integrating we see that \eqref{bec2} improves on \eqref{bec1}.

Inequality (\ref{imp1}) is always strict except when $y=0$. Also notice that when $y \to +\infty$ the right hand side  of (\ref{imp1}) increases as $y^{2}$ whereas the left hand side of (\ref{imp1}) increases as $y^{3/2}$. It should be mentioned as well that when $x \to 0$ the difference in (\ref{imp1}) goes to infinity. The only place where the quantities in (\ref{imp1}) are comparable is when $y/x \to 0$. We should notice that since the left hand side of (\ref{imp1}) is decreasing function in $x$ (see \cite{PIVO5}), and when $x=0$ it becomes $\frac{y^{3/2}}{\sqrt{2}}$ then it follows from (\ref{bec2}) 
\begin{align}\label{mconc}
\int_{\mathbb{R}^{n}} f^{3/2} d\gamma - \left( \int_{\mathbb{R}^{n}} f d\gamma \right)^{3/2} \leq \frac{1}{\sqrt{2}}\int_{\mathbb{R}^{n}} \| \nabla f \|^{3/2}d\gamma.
\end{align}  
Inequality (\ref{mconc}) gives some information about the measure concentration of $\gamma$. 

\bigskip 

{\em Proof of \textup{(\ref{bec2})}}. Take 
\begin{align}\label{ax1}
M(x,y) = \frac{1}{\sqrt{2}}\left(2x-\sqrt{x^{2}+y^{2}} \right)\sqrt{x+\sqrt{x^{2}+y^{2}}} \quad \text{where} \quad x,y \geq 0.
\end{align}

We have 
\begin{align}\label{imp2}
\begin{pmatrix}
M_{xx}+\frac{M_{y}}{y} & M_{xy}\\
M_{xy} & M_{yy}
\end{pmatrix}
=\frac{3\sqrt{2}}{8\sqrt{x^{2}+y^{2}}}\begin{pmatrix}
-\frac{y^{2}}{(x+\sqrt{x^{2}+y^{2}})^{3/2}} & \frac{y}{\sqrt{x+\sqrt{x^{2}+y^{2}}}}\\
\frac{y}{\sqrt{x+\sqrt{x^{2}+y^{2}}}} & - \sqrt{x+\sqrt{x^{2}+y^{2}}}
\end{pmatrix}.
\end{align}
Clearly (\ref{imp2}) is negative semidefinite. So by Corollary~\ref{sled} we obtain 
\begin{align}\label{in10}
\int_{\mathbb{R}^{n}} \frac{1}{\sqrt{2}}\left(2f-\sqrt{f^{2}+\frac{\| \nabla f\|^{2}}{R}} \right)\sqrt{f+\sqrt{f^{2}+\frac{\|\nabla f\|^{2}}{R}}} d\mu \leq \left(\int_{\mathbb{R}^{n}} f d\mu \right)^{3/2}.
\end{align}
This is of course \eqref{bec2}
for the Gaussian measure $\gamma$:  by taking $R=1$ and rearranging terms in (\ref{in10}) we obtain (\ref{bec2}). 

\bigskip

\subsubsection{Banaszczyk's problem: \textup{(B)} Theorem}
The problem was proposed by W.~Banaszczyk  (see for example~\cite{Lat}) which says that given symmetric convex body $K \subset \mathbb{R}^{n}$ the function $\phi(t) = \gamma(e^{t} K)$ is log-concave on $\mathbb{R}$. The problem was solved in~\cite{CFM}: clearly one only needs to check log-concavity at one point:  $(\ln \phi(t))''|_{t=0}\leq 0$. This is the same as 
\begin{align}\label{muravei}
\int_{\mathbb{R}^{n}}\|x\|^{4} d\gamma_{k} - \left( \int_{\mathbb{R}^{n}} \|x\|^{2} d\gamma_{K}\right)^{2}\leq 2 \int_{\mathbb{R}^{n}}\|x\|^{2} d\gamma_{K}
\end{align}
where 
\begin{align*}
d\gamma_{K} = \frac{\mathbbm{1}_{K}(x)e^{-\|x\|^{2}/2}dx}{\int_{K}e^{-\|y\|^{2}/2}dy}=e^{-\|x\|^{2}/2-\psi(x)}dx
\end{align*}
where a convex function $\psi$ is a constant on $K$ and it is $+\infty$ outside of the set $K$.  In other words one can assume that $d\gamma_{K}=e^{-U(x)}dx$ is a probability measure  where $U(x)$ is even and   such that $\mathrm{Hess}\, U \geq Id$.  Setting $f(x)=\|x\|^{2}$ then inequality (\ref{muravei}) can be rewritten as follows
\begin{align}\label{cfm1}
\int_{\mathbb{R}^{n}} f^{2} d\mu - \left(\int_{\mathbb{R}^{n}} f d\mu \right)^{2} \leq \frac{1}{2}\int_{\mathbb{R}^{n}} \| \nabla f \|^{2} d\mu.
\end{align} 
which is better than Poincar\'e inequality (\ref{poin}). This is a key ingredient in (B) Theorem and  it was proved by Cordero-Erausquin--Fradelizi--Maurey in~\cite{CFM} that (\ref{cfm1}) holds provided that 
$\int_{\mathbb{R}^{n}}\nabla f d\mu=0$, and $d\mu=e^{-U(x)}dx$ is a probability measure such that $\mathrm{Hess} \,U \geq Id$ (which is true for  $f(x)=\|x\|^{2}$). 

If one tries to apply Corollary~\ref{sled} then the right choice of the function $M$ must be 
\begin{align}\label{f5}
M(x,y)=x^{2}-\frac{y^{2}}{2}
\end{align}
 but unfortunately this function does not satisfy (\ref{mdiff}). However,  we want (\ref{cfm1}) to hold only for the functions such that $\int \nabla f d\mu=0$ therefore one can slightly modify the proof of Theorem~\ref{mth}  in order to obtain (\ref{cfm1}). In Section~\ref{oned} we will show how it works and we will present a different proof of (\ref{cfm1}) with the extra conditions that $f$ is even and $d\mu$ is even (which definitely is enough for the (B) Theorem).  

\bigskip

\subsubsection{$\Phi$-entropy}\label{ppent}
Let $\Phi : \Omega \to \mathbb{R}$ be a convex function. Given a probability measure $d\mu$ on $\mathbb{R}^{n}$ define $\Phi$-entropy (see \cite{chafai}) as follows 
\begin{align*}
 \boldsymbol{\mathrm{Ent}}_{\mu}^{\Phi}(f) \stackrel{\mathrm{def}}{=} \int_{\mathbb{R}^{n}}\Phi(f)d\mu - \Phi\left(\int_{\mathbb{R}^{n}} f d\mu \right).
\end{align*}
Corollary~\ref{sled} provides us with  systematic approach to finding the bounds of $\Phi$-entropy for uniformly log-concave measures $d\mu$. Indeed, let us illustrate it on the example of the Gaussian measure. Given a convex function $\Phi$ on $\Omega\subset \mathbb{R}$ let $M(x,y)$ be such that
$M(x,0)=\Phi(x)$, $M(x,\sqrt{y}) \in C^{2}(\Omega\times \mathbb{R}_{+})$ and $M$ satisfies (\ref{mdiff}). Then by Theorem~\ref{mth} we obtain 
\begin{align*}
\int_{\mathbb{R}^{n}} \Phi (f(x)) d\gamma - \Phi \left( \int_{\mathbb{R}^{n}} f d\gamma \right) \leq\int_{\mathbb{R}^{n}} [M(f,0) - M(f, \|\nabla f\|)] d\gamma.
\end{align*}

In our recent paper \cite{PIVO5} we do find the bounds of $\Phi$ entropy as an application of Theorem~\ref{mth} for the following fundamental examples
\begin{align}
  &\Phi(x) = x^{p} \quad \text{for}  \quad p \in \mathbb{R}\setminus [0,1]; \label{91}\\
  &\Phi(x) = -x^{p} \quad \text{for}  \quad p \in (0,1);\label{92}\\
  &\Phi(x) = e^{x};\label{93}\\
  &\Phi(x) = -\ln x.\label{94}
 \end{align}
 
 Finding the best possible $M$ is based on solving a PDE problem (\ref{Monge}) with  boundary conditions (\ref{91}, \ref{92}, \ref{93}, \ref{94}) (see Section~\ref{conclusions}, Section~\ref{ets} and \cite{PIVO5}). 

\bigskip

\subsubsection{Yet another isoperimetric inequality obtained by EDS method}\label{arccos}
In Section~\ref{ets}  we consider a peculiar example  (see Section~\ref{peculiar}) of the elliptic solution of PDE \eqref{MA1}
with initial data
\begin{align*}
M(x,0)= x \arccos(-x)+\sqrt{1-x^{2}}  \quad  \text{for} \quad  x \in [-1,1]
\end{align*}
which is not related to the applications that we have discussed before, but which gives yet another example of a new isoperimetric inequality.  It looks like a useful one in particular because Poincar\'e inequality is its corollary.

\bigskip

\subsubsection{Concluding remarks.}\label{conclusions}
As we shall notice in order to use Theorem~\ref{mth}  for the applications to functional (and thereby isoperimetric) inequalities, there is a difficulty: one  has to find the  right function $M(x,y)$, for example such as  (\ref{f1}), (\ref{f2}), (\ref{f3}), (\ref{ax1}), (\ref{f5}) and functions $M$ mentioned in Section~\ref{ppent} (see~\cite{PIVO5}).

  If one  knows what inequality should be  proved then one can try to guess what function $M(x,y)$ one has to choose: in the integrand one needs to set $g=x$ and $\| \nabla g \|=y$ and then integrand in terms of $x$ and $y$ will be $M(x,y)$. 


\bigskip 

In general  finding $M(x,y)$ will be based purely on solving PDEs. Let us recall the discussions of Section~\ref{ppent}. First notice that given, for example,  a convex function $\Phi: \Omega \to \mathbb{R}$  and suppose one wants to find an optimal  \emph{error term} in the Jensen's inequality ($\Phi$-entropy (see~\cite{chafai})) 
\begin{align*}
0\leq \int_{\mathbb{R}^{n}} \Phi (f(x)) d\gamma - \Phi \left( \int_{\mathbb{R}^{n}} f d\gamma \right) \leq \int_{\mathbb{R}^{n}} \mathrm{Error}(f, \|\nabla f\|) d\gamma \quad \text{for all} \quad   f \in C^{\infty}(\mathbb{R}^{n};\Omega). 
\end{align*}
If we find $M(x,y) \in C^{2}(\Omega\times\mathbb{R}_{+})$ such that $M(x,0)=\Phi(x)$ and $M(x,y)$ satisfies (\ref{mdiff}) then by Theorem~\ref{mth} we can find a possible error term as follows
\begin{align}\label{error}
\int_{\mathbb{R}^{n}} \Phi (f(x)) d\gamma - \Phi \left( \int_{\mathbb{R}^{n}} f d\gamma \right) \leq\int_{\mathbb{R}^{n}} [M(f,0) - M(f, \|\nabla f\|)] d\gamma.
\end{align}
In fact we would like to minimize  the error term which  corresponds to maximize $M(x,y)$  under the constraints (\ref{mdiff}) and $M(x,0)=\Phi(x)$. This suggests that partial differential inequality (\ref{mdiff}) should degenerate. Indeed, if  $\lambda_{1}(x,y)$ and $\lambda_{2}(x,y)$ denote eigenvalues of the matrix in (\ref{mdiff}) then condition (\ref{mdiff}) becomes $\lambda_{1}+\lambda_{2}\leq 0$ and $\lambda_{1} \cdot \lambda_{2} \geq 0$. If we have strict inequality $\lambda_{1}\cdot \lambda_{2} >0$ then $\lambda_{1}+\lambda_{2}<0$. In this case we can slightly perturb $M$ at a point $(x,y)$ so that to make $M(x,y)$ larger but still keep the inequality $\lambda_{1} \cdot \lambda_{2} >0$. Clearly the condition $\lambda_{1}+\lambda_{2}<0$ still holds. We can continuo  perturbing  $M$ until (\ref{mdiff}) degenerates. Therefore we will seek $M(x,y)$ among those functions which in addition with  (\ref{mdiff}) also satisfy a {\em degenerate elliptic  Monge--Amp\'ere equation of general type}:  
\begin{align}\label{Monge}
\det 
\begin{pmatrix}
M_{xx}+\frac{M_{y}}{y} & M_{xy}\\
M_{xy} & M_{yy}
\end{pmatrix} = M_{xx}M_{yy}-M_{xy}^{2}+\frac{M_{y}M_{yy}}{y}=0 
\end{align}
for $(x,y) \in \Omega \times \mathbb{R}_{+}$.

For example in log-Sobolev~(\ref{logsob})  and in Bobkov's inequality~(\ref{GI}) determinant of the matrices   (\ref{logM}) and (\ref{smat}) are zero. In Beckner--Sobolev inequality~(\ref{bc}) determinant of (\ref{bcM}) is zero if and only if $p=1, 2$. Notice that these are exactly cases when Beckner--Sobolev inequality interpolates Poincar\'e  and log-Sovbolev inequality. Moreover, since the determinant in Beckner--Sobolev inequality is not zero for $p\in (1,2)$ this indicates that one should improve the inequality, and this is exactly what was done in (\ref{bec2}).  We refer the reader to our recent paper \cite{PIVO5} where we do improve Beckner--Sobolev inequality by solving elliptic Monge--Amp\`ere equation (\ref{Monge}) with a boundary condition $M(x,0)=x^{p}$ with  $p \in \mathbb{R}$.  

In Section~\ref{ets} we will show that thanks to the exterior differential systems studied by R.~Bryant and P.~Griffiths (see \cite{br1, br2, br3}) nonlinear equation~(\ref{Monge}) can be reduced  (after suitable change of variables) to linear backwards heat equation. 
In Section~\ref{app12} we will illustrate this on the examples 
\begin{align*}
M(x,0)=x \ln x, \; M(x,0)=x^{2}, \; M(x,0)=-I(x) \quad \text{and} \quad  M(x,0)=x^{3/2}
\end{align*} 
which correspond to log-Sobolev, Poincar\'e,  Bobkov and $3/2$  inequalities. 

To justify the claim that Section~\ref{ets} makes approach to bounds of $\Phi$-entropy systematic we do consider a peculiar example  (see Section~\ref{peculiar})
\begin{align*}
M(x,0)= x \arccos(-x)+\sqrt{1-x^{2}}  \quad  \text{for} \quad  x \in [-1,1]
\end{align*}
which is not related to the applications that we have discussed before.

\section{Function of  variables $D^{\alpha}{\bf f}$. The proof of Theorem~\ref{mth}}
\label{several}
We prove here Theorem ~\ref{mth}   and even more general Theorem \ref{absth}. The reader who a priori believes in Theorem ~\ref{mth}  can skip to the next Section ~\ref{ets} devoted to the exterior differential systems (EDS) method of finding the elliptic solutions of PDE \eqref{MA1} (by elliptic solutions we mean the solutions $M$ satisfying the condition \eqref{mdiff} on $M$).

Let $d\mu=e^{-U(x)}dx$ be a log-concave measure such that $U$ is smooth and $\mathrm{Hess}\, U \geq R \cdot Id$. Set $L=\Delta  - \nabla U \cdot \nabla $. Then  $-L$ is a self-adjoint positive operator in $L^{2}(\mathbb{R}^{n},d\mu)$, moreover by~(\ref{poin})  it has a spectral gap.  Let $P_{t}:=e^{tL}$ be the corresponding semigroup generated by $L$. 
 Let $\alpha=(\alpha_{0},\ldots, \alpha_{m})$ where   $\alpha_{j}=(\alpha_{j}^{1},\ldots, \alpha_{j}^{n})$ is a multi index of size $n$ and  $\alpha_{j}^{i}\in \mathbb{N}\cup \{ 0\}$ for each $j=0,\ldots m$ and $i=1,\ldots, n$. Let $|\alpha_{j}|$ be the length of the multi index i.e. $|\alpha_{j}|= \alpha_{j}^{1}+\ldots+\alpha_{j}^{n}$. By $D^{\alpha_{j}}$ we denote the  differential operator 
 \begin{align}\label{dop}
 D^{\alpha_{j}}=\frac{\partial^{|\alpha_{j}|}}{\partial x^{\alpha_{j}^{1}}_{1}\cdots \partial x_{n}^{\alpha_{j}^{n}}}.
 \end{align}
 Further we fix some {\em multi-multi} index  $\alpha = (\alpha_{0},\ldots, \alpha_{m})$  where each $\alpha_{j}$ is a multi index of size $n$ as above.

\subsection*{Test functions $C^{\infty}(\mathbb{R}^{n}; \Lambda)$}
 Let $\Lambda$ be a closed convex subset of $\mathbb{R}^{m}$. By $C^{\infty}(\mathbb{R}^{n}; \Lambda)$ we denote the set of test functions  ${\bf f}=(f_{0},\ldots, f_{m}): \mathbb{R}^{n} \to \Lambda$ i.e., smooth bounded vector functions with values in $\Lambda$.  Let 
 \begin{align*}
 D^{\alpha} {\bf f} = (D^{\alpha_{0}}f_{0},\ldots, D^{\alpha_{m}}f_{m}) \quad \text{and} \quad  P_{t} {\bf f} := (P_{t}f_{0},\ldots, P_{t}f_{m}).
 \end{align*}
 We require that $C^{\infty}(\mathbb{R}^{n}; \Lambda)$ is closed under taking $D^{\alpha}$, i.e., $D^{\alpha}C^{\infty}(\mathbb{R}^{n}; \Lambda) \subset C^{\infty}(\mathbb{R}^{n}; \Lambda)$.  Linearity and positivity of $P_{t}$ implies that $P_{t} {\bf f}, P_{t} D^{\alpha} {\bf f} \in \Lambda$ for any $f \in C^{\infty}(\mathbb{R}^{n}; \Lambda)$. 
 

Let $B(u_{1},\ldots, u_{m}) : \Lambda \to \mathbb{R}$ be a smooth (at least $C^{2}$) function, such that $P_{t} B(D^{\alpha}{\bf f})$ is well defined for all $t\geq 0$.  Set 
\begin{align*}
 [L,D^{\alpha}]{\bf f} \ddf ([L,D^{\alpha_{0}}]f_{0},\dots,[L,D^{\alpha_{m}}]f_{m})  \quad \text{and} \quad \Gamma(D^{\alpha}{\bf f}) \ddf \{ \langle \nabla D^{\alpha_{i}} f_{i}, \nabla D^{\alpha_{j}} f_{j} \rangle \}_{i,j=0}^{m}
\end{align*}
where $\Gamma(D^{\alpha}{\bf f})$ denotes $(m+1)\times (m+1)$, and $[A,B] = AB-BA$ denotes commutator of $A$ and $B$.
\begin{theorem}\label{absth}
The following conditions are equivalent:
\begin{itemize}
\item[\textup{(i)}] $\nabla B(D^{\alpha}{\bf f})\cdot [L,D^{\alpha}]{\bf f} + \mathrm{Tr}[ \mathrm{Hess}\, B(D^{\alpha}{\bf f}) \; \Gamma(D^{\alpha}{\bf f})]\leq 0$ for all $f\in C^{\infty}(\mathbb{R}^{n}; \Lambda)$. 

\item[\textup{(ii)}] $P_{t}[B(D^{\alpha} {\bf f})](x) \leq B(D^{\alpha} [P_{t} {\bf f}] (x))$ for all $t\geq 0$, $x \in \mathbb{R}^{n}$ and  $f\in C^{\infty}(\mathbb{R}^{n}; \Lambda)$. 
\end{itemize}
\end{theorem}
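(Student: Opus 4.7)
The plan is the classical semigroup interpolation argument of Bakry--Emery--Ledoux, adapted to the vector/multi-index setting considered here. The key device is to fix $t \geq 0$ and $x \in \mathbb{R}^n$, and study the interpolating function
\begin{equation*}
\Phi(s) \ddf P_s\bigl[B(D^\alpha P_{t-s}{\bf f})\bigr](x), \qquad s \in [0,t].
\end{equation*}
By construction $\Phi(0) = B(D^\alpha P_t {\bf f})(x)$ and $\Phi(t) = P_t[B(D^\alpha {\bf f})](x)$, so assertion (ii) is equivalent to $\Phi$ being non-increasing on $[0,t]$.

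For the direction (i) $\Rightarrow$ (ii), I would differentiate $\Phi$, using that $L$ commutes with $P_s$, to get
\begin{equation*}
\Phi'(s) = P_s\!\left[ L\bigl(B(D^\alpha P_{t-s}{\bf f})\bigr) + \tfrac{d}{ds}B(D^\alpha P_{t-s}{\bf f}) \right](x).
\end{equation*}
The time derivative inside gives $-\nabla B(D^\alpha P_{t-s}{\bf f}) \cdot D^\alpha L P_{t-s}{\bf f}$, while the diffusion chain rule for $L = \Delta - \nabla U \cdot \nabla$ yields
\begin{equation*}
L\bigl(B\circ F\bigr) = \nabla B(F) \cdot LF + \mathrm{Tr}\bigl[\Hess B(F)\,\Gamma(F)\bigr]
\end{equation*}
with $\Gamma(F)_{ij} = \langle \nabla F_i, \nabla F_j\rangle$. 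Applying this with $F = D^\alpha P_{t-s}{\bf f}$ and combining, the $LD^\alpha$ and $D^\alpha L$ terms assemble into the commutator $[L,D^\alpha]P_{t-s}{\bf f}$, so
\begin{equation*}
\Phi'(s) = P_s\!\left[ \nabla B(D^\alpha P_{t-s}{\bf f}) \cdot [L,D^\alpha] P_{t-s}{\bf f} + \mathrm{Tr}\bigl[\Hess B(D^\alpha P_{t-s}{\bf f})\,\Gamma(D^\alpha P_{t-s}{\bf f})\bigr] \right](x).
\end{equation*}
Since $P_{t-s}{\bf f} \in C^\infty(\mathbb{R}^n;\Lambda)$ (by linearity and positivity of $P_{t-s}$ together with convexity of $\Lambda$), hypothesis (i) applied to $P_{t-s}{\bf f}$ makes the bracket pointwise $\leq 0$, and positivity of $P_s$ gives $\Phi'(s) \leq 0$.

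For (ii) $\Rightarrow$ (i), I would fix ${\bf f}$ and $x$ and consider
\begin{equation*}
h(t) \ddf B(D^\alpha P_t {\bf f})(x) - P_t\bigl[B(D^\alpha {\bf f})\bigr](x).
\end{equation*}
By (ii), $h(t) \geq 0 = h(0)$ for all $t\geq 0$, so $h'(0^+) \geq 0$. Computing $h'(0)$ with the same chain rule as above produces exactly
\begin{equation*}
h'(0) = -\nabla B(D^\alpha {\bf f})(x) \cdot [L,D^\alpha]{\bf f}(x) - \mathrm{Tr}\bigl[\Hess B(D^\alpha {\bf f})\,\Gamma(D^\alpha {\bf f})\bigr](x),
\end{equation*}
and since $x$ and ${\bf f}$ are arbitrary, $h'(0) \geq 0$ is exactly condition (i).

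The main obstacle is analytical rather than conceptual: one must justify differentiating under $P_s$ and interchanging $L$ with $P_s$ for functions of the form $B(D^\alpha P_{t-s}{\bf f})$, which requires enough smoothness/decay to place these quantities in the domain of the generator. This is handled by the hypothesis that $C^\infty(\mathbb{R}^n;\Lambda)$ is closed under $D^\alpha$ together with boundedness, so that all derivatives of $P_{t-s}{\bf f}$ needed for the chain rule are well-defined and $P_s$-integrable; the statement itself builds this assumption in by requiring $P_t B(D^\alpha {\bf f})$ to be well defined. Once that is secured, the computation above is routine — the substance of the proof is the single algebraic identity relating $L\!\circ\! B(D^\alpha\cdot)$, $B'(D^\alpha\cdot)\cdot D^\alpha L$, the commutator $[L,D^\alpha]$, and the carré du champ $\Gamma$.
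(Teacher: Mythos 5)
Your proposal is correct and matches the paper's own argument: the paper also bounds $V(x,t)=\int_0^t \frac{\partial}{\partial s}P_s B(D^{\alpha}P_{t-s}{\bf f})\,ds$ by showing the integrand is nonpositive (which is exactly your $\Phi'(s)\le 0$), and it derives (ii)$\Rightarrow$(i) by differentiating at $t=0$. The only cosmetic difference is that the paper additionally mentions a maximum-principle alternative for (i)$\Rightarrow$(ii), but the interpolation you wrote is the one it actually uses.
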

\begin{proof}
$(i)$ implies $(ii)$: let $V(x,t) = P_{t}[B(D^{\alpha} {\bf f})](x) - B(D^{\alpha} [P_{t} {\bf f}] (x))$. Notice that  
\begin{align}\label{maxp}
&(\partial_{t}-L) V(x,t)=(L-\partial_{t}) B(D^{\alpha} [P_{t} {\bf f}] (x))= \nonumber \\
&\sum_{j} \frac{\partial B}{\partial u_{j}} LD^{\alpha_{j}}P_{t}f_{j} + \sum_{i,j} \frac{\partial^{2} B}{\partial u_{i} \partial u_{j}} \nabla D^{\alpha_{i}}P_{t}f_{i} \cdot \nabla D^{\alpha_{j}}P_{t} f_{j} - \sum_{j} \frac{\partial B}{\partial u_{j}} D^{\alpha_{j}}LP_{t} f_{j}=\nonumber \\
& \nabla B(D^{\alpha}{P_{t} \bf f})\cdot [L,D^{\alpha}]{P_{t} \bf f} + \mathrm{Tr}(\mathrm{Hess}\, B(D^{\alpha}{P_{t}\bf f}))\; \Gamma(D^{\alpha}{P_{t}\bf f}))\leq 0 
\end{align}
The last inequality follows from (i) and  the fact that $P_{t} {\bf f}(x) \in \Lambda$. Indeed,  we can find a function ${\bf g} \in C^{\infty}(\mathbb{R}^{n}; \Lambda)$ such that ${\bf g} = P_{t} {\bf f}$ in a neighborhood of $x$ and we can apply (i) to ${\bf g}$. 

By maximum principle we obtain $V(x,t)\leq \sup_{x} V(x,0)=0$. Another way (without maximum principle) is that 
\begin{align}\label{lees}
V(x,t)=\int_{0}^{t} \frac{\partial }{\partial s} P_{s}B(D^{\alpha} P_{t-s} {\bf f}) ds=\int_{0}^{t}P_{s} \left[ \left(L-\frac{\partial}{\partial t} \right) B(D^{\alpha} P_{t-s} {\bf f})\right] ds,
\end{align}
and  the integrand in (\ref{lees}) is non positive by (\ref{maxp}). 

\bigskip 

(ii) impies (i):  for all ${\bf f} \in C^{\infty}(\mathbb{R}^{n}; \Lambda)$ we have 
\begin{align*}
&0 \geq \lim_{t \to 0} \frac{V(x,t)}{t} = \lim_{t\to 0} \frac{V(x,t)-V(x,0)}{t}=\frac{\partial }{\partial t} V(x,t)|_{t=0}=\\
&\nabla B(D^{\alpha}{\bf f})\cdot [L,D^{\alpha}]{\bf f} + \mathrm{Tr}(\mathrm{Hess}\, B(D^{\alpha}{\bf f}))\; \Gamma(D^{\alpha}{\bf f})).
\end{align*}
\end{proof}

\begin{remark}

We notice that  if one considers diffusion semigroups generated by 
\begin{align*}
L=\sum_{ij} a_{ij}(x) \frac{\partial^{2}}{\partial x_{i} \partial x_{j}}+ \sum_{j} b_{j}(x) \frac{\partial }{\partial x_{j}}
\end{align*}
where $A=\{ a_{ij}\}_{i,j=1}^{n}$ is  positive then absolutely nothing changes in Theorem~\ref{absth} except the matrix $\Gamma(D^{\alpha}{\bf f})$ takes the form 
 \begin{align*}
 \Gamma(D^{\alpha}{\bf f})=\{ \nabla D^{\alpha_{i}}f_{i} \, A \, (\nabla D^{\alpha_{j}}f_{j})^{T}\}_{i,j=0}^{m}.
 \end{align*}
 \end{remark}

\medskip

\subsection{Proof of Theorem~\ref{mth}}
Consider a special case when $n=m$,  $ {\bf f} =\underbrace{(f,\ldots, f)}_{n+1}$, $\alpha_{0}=\underbrace{(0,\ldots, 0)}_{n}$, $\alpha_{1}=(1,0,\ldots, 0),\ldots$, and $\alpha_{n}=(0,\ldots,0,1)$. Then $D^{\alpha} {\bf f} = (f, \nabla f)$, and given that $L=\Delta-\nabla U \cdot \nabla$ we obtain 
\begin{align*}
\nabla B(D^{\alpha} {\bf f}) \cdot [L,D^{\alpha}]{\bf f} = \nabla_{1,\ldots, n} B \, (\mathrm{Hess}\, U) (\nabla f)^{T}.
\end{align*}
Here $\nabla_{1,\ldots, n} B$ is a gradient of $B(u_{0},\ldots, u_{n})$ taken with respect to $u_{1},\ldots, u_{n}$ variables. 
Assume that $f$ takes values in the closed convex set $\Omega \subset \mathbb{R}$.
Take 
\begin{align}\label{bfun1}
B(u_{0},\ldots, u_{n})=M\left(u_{0}, \sqrt{\frac{u_{1}^{2}+\cdots +u_{n}^{2}}{R}} \right),
\end{align}
where $M(x,\sqrt{y}) \in C^{2}(\Omega \times \mathbb{R}_{+})$ satisfies (\ref{mdiff}).  Notice that $M_{y} \leq 0$. Indeed, if we multiply the first diagonal entry of (\ref{mdiff}) by $y$ and send $y \to 0$ we obtain $M_{y}(x,0)\leq 0$. On the other hand  since the second diagonal entry of (\ref{mdiff}) is nonpositive we obtain $M_{y}(x,y)\leq 0$ for all $y$.  

Next we notice
\begin{align*}
\nabla_{1,\ldots, n} B (D^{\alpha}{\bf f}) = \frac{M_{y}}{\|\nabla f\| \sqrt{R}}\nabla f.
\end{align*}
Since $M_{y}\leq 0$ and $\mathrm{Hess}\, U \geq R \cdot Id$, we have  
\begin{align*}
\nabla B(D^{\alpha} {\bf f}) \cdot [L,D^{\alpha}]{\bf f} = \frac{M_{y}}{\|\nabla f\| \sqrt{R}}\nabla f (\mathrm{Hess}\, U) (\nabla f)^{T} \leq \sqrt{R} \|\nabla f\| M_{y}.
\end{align*}
Therefore 
\begin{align*}
\nabla B(D^{\alpha}{\bf f})\cdot [L,D^{\alpha}]{\bf f} + \mathrm{Tr}(\mathrm{Hess}\, B(D^{\alpha}{\bf f}))\; \Gamma(D^{\alpha}{\bf f}))\leq \mathrm{Tr} (W\, \Gamma(D^{\alpha}{\bf f}))
\end{align*}
where 
\begin{align*}
W = 
\begin{bmatrix}
    \partial^{2}_{00}B+\frac{\sqrt{R}\cdot M_{y}}{\|\nabla f\|}       & \partial^{2}_{01}B  & \dots & \partial^{2}_{0n}B  \\
    \partial^{2}_{10}B      & \partial_{11}^{2}B & \dots & \partial^{2}_{1n}B \\
    \hdotsfor{4} \\
    \partial^{2}_{n0}B    &\partial^{2}_{n1}B & \dots & \partial^{2}_{nn}B 
\end{bmatrix}
\end{align*}
where $\partial^{2}_{ij}B = \frac{\partial^{2} B}{\partial u_{i} \partial u_{j}} $.  We will show that $W \leq 0$, and then we will obtain
$\mathrm{Tr} (W\, \Gamma(D^{\alpha}{\bf f})) \leq 0$ because $ \Gamma(D^{\alpha}{\bf f}) \geq 0$.

We have $\partial^{2}_{00} B = M_{xx}$, $\partial^{2}_{0j} B = \frac{M_{xy}}{\| \nabla f \| \sqrt{R}}  f_{x_{j}}$ for all $j\geq 1$ and 
\begin{align*}
\partial^{2}_{ij} B = \frac{M_{yy}}{\| \nabla f\|^{2} R} f_{x_{i}} f_{x_{j}} - \frac{M_{y}}{\| \nabla f\|^{3} \sqrt{R}} f_{x_{i}} f_{x_{j}} + \frac{M_{y} \delta_{ij}}{\| \nabla f\| \sqrt{R}} \quad \text{for} \quad i,j \geq 1
\end{align*}
where $\delta_{ij}$ is Kronecker symbol. 

Notice that  since $M(x,\sqrt{y})\in C^{2}(\Omega\times \mathbb{R}_{+})$ we have that  $B\in C^{2}(\Omega \times \mathbb{R}^{n})$.  If $\nabla f = 0$ then there is nothing to prove because $W$ becomes diagonal matrix with negative entries on the diagonal. Further assume $\|\nabla f \|\neq 0$. 

Now notice that 
\begin{align*}
W=S\left(W_{1}+\frac{M_{y}\sqrt{R}}{\|\nabla f\|} W_{2}\right)S
\end{align*}
where $S$ is a diagonal matrix with diagonal $(1,\frac{\nabla f}{\| \nabla f\| \sqrt{R}})$, and 
\begin{align*}
W_{1} = 
\begin{bmatrix}
    M_{xx}+\frac{\sqrt{R}\cdot M_{y}}{\|\nabla f\|}       & M_{xy}  & \dots & M_{xy}  \\
    M_{xy}      & M_{yy} & \dots & M_{yy} \\
    \hdotsfor{4} \\
    M_{xy}    &M_{yy} & \dots & M_{yy} 
\end{bmatrix}
\quad \text{and} \quad 
W_{2}=
\begin{bmatrix}
    0      & 0  & 0 & \dots & 0  \\
   0      & \frac{\| \nabla f\|^{2}}{(f_{x_{1}})^{2}} -1& -1& \dots & -1 \\
      0      &  -1&\frac{\| \nabla f\|^{2}}{(f_{x_{2}} )^{2}} -1& \dots & -1 \\
    \hdotsfor{5} \\
    0    &-1 & \dots & -1& \frac{\| \nabla f\|^{2}}{(f_{x_{n}} )^{2}} -1 
\end{bmatrix}
\end{align*}
It is clear that $W_{1}\leq 0$ because $M$ satisfies (\ref{mdiff}) at point $x$ and $\frac{y}{\sqrt{R}}$.

For the  $W_{2}$, first notice that if $f_{x_{j}}\neq 0$ for all $j\geq 1$ then $W_{2}$ is well defined and $W_{2}\geq 0$. Otherwise if $f_{x_{j}}=0$ for some $j$, then consider initial expression $SW_{2}S$  and notice that $SW_{2}S=S\tilde{W}_{2}S+D$, where $\tilde{W}_{2}$ is the same as $W_{2}$ except $j$th column and row are replaced by zeros, and $D$ is zero matrix except the element $(j,j)$ is equal to $\frac{1}{R}$. We again see that $SW_{2}S \geq 0$. Hence $M_{y} SW_{2}S \leq 0$ as soon as (\ref{mdiff}) hods. 
 
 Thus we have proved that if $M(x,\sqrt{y})\in C^{2}(\Omega\times \mathbb{R}_{+}),$ $M$ satisfies (\ref{mdiff})  then by Theorem~\ref{absth} we  have
 \begin{align}\label{interpolate}
 P_{t} M(f,\| \nabla f\|)\leq M(P_{t} f, \| \nabla P_{t} f\|)\quad \text{for all} \quad f\in C^{\infty}(\mathbb{R}^{n};\; \Omega).
 \end{align} 

We send $t\to \infty$ and because of the fact $\| \nabla P_{t} f\|\leq e^{-tR}P_{t} \|\nabla f\|$ (see~\cite{BGL}) we obtain 
\begin{align*}
\int_{\mathbb{R}^{n}}M(f,\|\nabla f\|)d\mu \leq M\left(\int_{\mathbb{R}^{n}} fd\mu, 0 \right),
\end{align*}
where $d\mu=e^{-U(x)}dx$ is a probability measure. 

\begin{remark}
It is worth mentioning but not necessary for our purposes  that (\ref{interpolate}) also implies (\ref{mdiff}) in case of Gaussian measure. This follows from the fact that the matrix $\lambda \Gamma(D^{\alpha}{\bf f})$ can be an arbitrary positive definite matrix where $\lambda >0$ and ${\bf f} \in C^{\infty}(\mathbb{R}^{n};\; \Lambda)$. Then condition $\mathrm{Tr} (W  \Gamma(D^{\alpha}{\bf f}))\leq 0$ implies that $W \leq 0$ and this  gives us condition (\ref{mdiff}). 
\end{remark}
\subsection{Relation to stochastic calculus and $\Gamma$-calculus approach}
Inequality (\ref{interpolate}) implies that the map 
\begin{align}\label{mont}
t \to \int_{\mathbb{R}^{n}}M(P_{t} f, \|\nabla P_{t} f\|) d\gamma
\end{align}
is monotone provided that $M$ satisfies (\ref{mdiff}). Indeed, by sending $t \to 0$ in (\ref{interpolate}) we obtain its infinitesimal form 
$L  M(f, \| \nabla f \|) \leq \left. \frac{d}{ds} M(P_{s} f, \| \nabla P_{s} f\|) \right|_{s =0}.$
Finally, if the last inequality is true for any $f$ then it is true for any $f$ of the form $P_{t} f$. This implies that
\begin{align*}
L  M(P_{t} f, \| \nabla P_{t} f \|) \leq \left. \frac{d}{ds} M(P_{s} f, \| \nabla P_{s} f\|) \right|_{s =t},
\end{align*}
 and it gives monotonicity of (\ref{mont}).  
  
  Interpolation (\ref{interpolate}) (or even monotonicity (\ref{mont})) plays a fundamental role in functional inequalities and it was known before for some particular functions $M(x,y)$ as a consequence of their special properties and some linear algebraic manipulations (see \cite{BGL}, \cite{BaMa}). The purpose of Theorem~\ref{mth} was to exclude the linear algebra involved in the interpolation (\ref{interpolate}) and to show that in fact (\ref{interpolate})  boils down (actually it is equivalent) to the fact that $M$ satisfies  an \emph{ elliptic Monge--Amp\`ere equation of a general form} (\ref{mdiff}). Monge--Amp\`er\`ere equation (\ref{mdiff}),  apparently, was not noticed before or it was hidden in the literature from the wide audience.  Equations of Monge--Amp\`ere type are of course among the most important fully nonlinear partial differential equations (see \cite{PHFI}, \cite{TrWa}).
  
Next we will show that  in fact   (\ref{mdiff}) gives monotonicity of the type (\ref{mont}) in different settings as well.

\medskip

  \subsubsection{Stochastic calculus approach}
\begin{proposition}\label{stochastic}
For $t \geq 0$, let $W_{t}, N_{t}$ be $\mathcal{F}_{t}$ real-valued martingales with $W_{t} = W_{0}+\int_{0}^{t}w_{s} dB_{s}, N_{t} = N_{0}+\int_{0}^{t} n_{s} dB_{s}$, and let $A_{t} = A_{0} + \int_{0}^{t} a_{s} ds$ where $A_{0}, a_{s} \geq 0$. Assume that $A_{t}$ is bounded, $a_{t} |N_{t}|^{2} \geq |w_{t}|^{2}$ and $W_{t} \in \Omega$ for $t \geq 0$. Assume that $M(x,\sqrt{y}) \in C^{2}(\Omega\times \mathbb{R})$ satisfies (\ref{mdiff}). Then 
\begin{align*}
z_{t} = M(W_{t}, |N_{t}| \sqrt{A_{t}})
\end{align*}
is a supermartingale for $t \geq 0$. 
\end{proposition}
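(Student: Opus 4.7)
The plan is to apply Itô's formula to $z_t = \tilde M(W_t, V_t)$ with $\tilde M(x,y) := M(x, \sqrt y)$ and $V_t := N_t^2 A_t$, and verify that the drift is non-positive; together with standard integrability from $W_t \in \Omega$ and $A_t$ bounded, this gives the supermartingale property. Itô applies directly since $\tilde M \in C^2(\Omega \times \mathbb{R}_+)$ and $(W_t, V_t) \in \Omega \times \mathbb{R}_+$.

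First, using $dN_t^2 = 2 N_t n_t\, dB_t + n_t^2\, dt$ and $dA_t = a_t\, dt$ (finite variation), the product rule yields $dV_t = 2 A_t N_t n_t\, dB_t + (A_t n_t^2 + N_t^2 a_t)\, dt$, together with $d[W,W]_t = w_t^2\, dt$, $d[W,V]_t = 2 A_t N_t n_t w_t\, dt$, and $d[V,V]_t = 4 A_t^2 N_t^2 n_t^2\, dt$. Substituting into Itô produces the drift of $z_t$,
\begin{equation*}
\mathcal{D}_t = \tilde M_y (A_t n_t^2 + N_t^2 a_t) + \tfrac12 \tilde M_{xx} w_t^2 + 2 \tilde M_{xy} A_t N_t n_t w_t + 2 \tilde M_{yy} A_t^2 N_t^2 n_t^2.
\end{equation*}

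The crux is a chain-rule translation from $\tilde M$ to $M$. On $\{V_t > 0\}$, letting $u := \sqrt{V_t} = |N_t|\sqrt{A_t}$, one has $\tilde M_y = M_y/(2u)$, $\tilde M_{xy} = M_{xy}/(2u)$, $\tilde M_{yy} = M_{yy}/(4u^2) - M_y/(4u^3)$. Using $A_t^2 N_t^2 = u^2 A_t$, the $M_y$-contribution from $\tilde M_y A_t n_t^2$ cancels exactly against the $-M_y/(4u^3)$-contribution from $\tilde M_{yy} \cdot 4 A_t^2 N_t^2 n_t^2$. Setting $\xi := \mathrm{sign}(N_t)\sqrt{A_t}\, n_t$, so $\xi^2 = A_t n_t^2$ and $A_t N_t n_t/u = \xi$, the drift reduces to
\begin{equation*}
\mathcal{D}_t = \frac{M_y}{2u} N_t^2 a_t + \tfrac12 M_{xx} w_t^2 + M_{xy} w_t \xi + \tfrac12 M_{yy} \xi^2.
\end{equation*}
Since $M_y \le 0$ (observed in the proof of Theorem~\ref{mth}) and $a_t N_t^2 \ge w_t^2$, the first term is bounded above by $(M_y/2u)\, w_t^2$, so
\begin{equation*}
\mathcal{D}_t \le \tfrac12 \begin{pmatrix} w_t & \xi \end{pmatrix} \begin{pmatrix} M_{xx} + M_y/u & M_{xy} \\ M_{xy} & M_{yy} \end{pmatrix} \begin{pmatrix} w_t \\ \xi \end{pmatrix} \le 0
\end{equation*}
by (\ref{mdiff}), which is exactly what is needed.

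The main obstacle is the singularity at $u = 0$: the $M$-derivatives above blow up while $\tilde M$ stays $C^2$, so the translation is only algebraically valid on $\{V_t > 0\}$. The exceptional set $\{V_t = 0\}$ can be handled either by continuity of the Itô drift in the state $(W_t, V_t)$, or directly: when $N_t = 0$, the hypothesis forces $w_t = 0$ and only the non-positive term $\tilde M_y A_t n_t^2$ survives; when $A_t = 0$, the $C^2$ regularity of $\tilde M$ forces $M_y(W_t, 0) = 0$ and $\tilde M_y(W_t, 0) = M_{yy}(W_t, 0)/2$, reducing the inequality to the $u \to 0^+$ limit of (\ref{mdiff}). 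A standard localization argument, relying on boundedness of $A_t$ and $W_t \in \Omega$, finally upgrades the local supermartingale obtained to a genuine supermartingale.
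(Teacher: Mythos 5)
Your proof is correct and is exactly the argument the paper has in mind; the paper does not write out the details, it simply defers to Barthe--Maurey (\cite{BaMa}), stating only that \eqref{mdiff} makes the drift nonpositive. You have reconstructed that computation for a general $M$ satisfying \eqref{mdiff}: apply It\^o to $z_t=\tilde M(W_t,V_t)$ with $\tilde M(x,y)=M(x,\sqrt y)$, $V_t=N_t^2A_t$; the product and quadratic-variation terms you list are right; the chain-rule identities $\tilde M_y=M_y/(2u)$, $\tilde M_{xy}=M_{xy}/(2u)$, $\tilde M_{yy}=M_{yy}/(4u^2)-M_y/(4u^3)$ with $u=|N_t|\sqrt{A_t}$ are right; the cancellation of the $A_t n_t^2$ piece against the $-M_y/(4u^3)$ piece (using $A_t^2N_t^2=u^2A_t$) is exactly what makes the drift collapse to the quadratic form
\[
\mathcal D_t=\tfrac{M_y}{2u}N_t^2 a_t+\tfrac12 M_{xx}w_t^2+M_{xy}w_t\xi+\tfrac12 M_{yy}\xi^2,\qquad \xi=\mathrm{sign}(N_t)\sqrt{A_t}\,n_t,
\]
and the estimate $\tfrac{M_y}{2u}N_t^2 a_t\le \tfrac{M_y}{2u}w_t^2$ (using $M_y\le0$ and $a_tN_t^2\ge w_t^2$) produces precisely the $2\times2$ matrix of \eqref{mdiff} acting on $(w_t,\xi)$. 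Your handling of the locus $V_t=0$ is also correct: when $N_t=0$ the hypothesis forces $w_t=0$ and the $\tilde M$-form of the drift is $\tilde M_y A_t n_t^2\le0$; when $A_t=0$ the $C^2$-regularity of $\tilde M$ forces $M_y(\cdot,0)=0$, hence $\tilde M_y(\cdot,0)=\tfrac12M_{yy}(\cdot,0)$, and the remaining drift is bounded by $\tfrac12\bigl(M_{xx}(\cdot,0)+M_{yy}(\cdot,0)\bigr)w_t^2\le0$, which is the $y\to0^+$ limit of the top-left entry of \eqref{mdiff}. The closing localization step (using boundedness of $A_t$ and compactness of $\Omega$) is standard and correctly invoked.
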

\begin{proof}
The proof of the proposition proceeds absolutely  in the same way as in \cite{BaMa}, which treat the case of a {\it particular} function $M$ involved in Bobkov's inequality. In fact, it is (\ref{mdiff})  which makes the drift $\Delta(t)$ nonpositive where $dz_{t} = u_{t} dB_{t} + \Delta(t) dt$. 
\end{proof}
   
 One may obtain another proof of Theorem~\ref{mth} for the case $n=1$ using Proposition~\ref{stochastic} for the special case $W_{t} = \mathbb{E} [f(B_{1})|\mathcal{F}_{t}]$, $A_{t} = t$ and $N_{t} = \mathbb{E}[\nabla f(B_{1})|\mathcal{F}_{t}]$ where $0 \leq t \leq 1$, $B_{0}=0$,  and  $f$ is a real valued smooth bounded function. Indeed, in this case by optional stopping theorem one obtains 
 \begin{align*}
 M(\mathbb{E} [f(B_{1})|\mathcal{F}_{0}],0)=z_{0}\geq \mathbb{E}z_{1} = \mathbb{E} M(f(B_{1}), |\nabla f(B_{1})| ).
 \end{align*}

\subsubsection{$\Gamma$-calculus approach}
One may obtain Theorem~\ref{mth} using the remarkable  $\Gamma$-calculus (see \cite{BGL}). In fact, setting $\Gamma(f,g) = \nabla f \cdot \nabla g$ one can easily show  that  if $M(x,\sqrt{y})\in C^{2}(\Omega\times \mathbb{R}_{+})$ satisfies (\ref{mdiff}) then the following map 
\begin{align*}
s \to P_{s} B(P_{t-s}f, \Gamma(P_{t-s}f, P_{t-s}f))
\end{align*}
is monotone for $0\leq s \leq t$ for any given $t>0$, where $B(x,y^{2}) = M(x,y)$. Initially this was the way we obtained Theorem~\ref{mth} (see for example \cite{paataphd}).  Later it became clear to us that one does not have to be limited with notations $\Gamma, \Gamma_{2}, \Gamma_{3} $ etc in order to enjoy interpolations of the form (\ref{interpolate}).    In fact, one can directly work with an arbitrary differential operator $D^{\alpha}$ (\ref{dop}), and Theorem~\ref{mth} is just a consequence of  Theorem~\ref{absth} for an appropriate choice of \emph{Bellman function} (\ref{bfun1}). In support of classical notations we should say that it is not clear for us how $\Gamma$-calculus can be used in proving Theorem~\ref{absth} which is a simple statement if working with the classical notations of differential operators $D^{\alpha}$.


\section{Reduction to the exterior differential systems and backwards heat equation}
\label{ets}
As we have already mentioned in Section~\ref{conclusions} (and it also follows from the proof of Theorem~\ref{mth}) in order inequality (\ref{gcase}) to be \emph{sharp}  we need to assume that (\ref{mdiff}) degenerates i.e., 
\begin{align}\label{degen}
\det \begin{pmatrix}
M_{xx}+\frac{M_{y}}{y} & M_{xy}\\
M_{xy} & M_{yy}
\end{pmatrix} = M_{xx}M_{yy}-M_{xy}^{2}+\frac{M_{y}M_{yy}}{y}= 0.
\end{align}
Let us make the following observation: consider 1-graph of $M(x,y)$ i.e., 
\begin{align*}
(x,y,p,q)=(x,y,M_{x}(x,y),M_{y}(x,y))
\end{align*} 
in $xypq$-space. This is a simply-connected surface $\Sigma$ in $4$-space on which $\Upsilon=\dd x \wedge \dd y$ is non-vanishing but to which the two 2-forms 
\begin{align*}
\Upsilon_{1}=\dd p \wedge \dd x+\dd q\wedge \dd y \quad \text{and} \quad \Upsilon_{2}=(y\dd p+q \dd x)\wedge \dd q
\end{align*}
pull back to be zero.

Conversely, suppose given simply connected surface $\Sigma$ in $xypq$-space (with $y>0$) on which $\Upsilon$ is non-vanishing but to which $\Upsilon_{1}$ and $\Upsilon_{2}$ pullback to be zero. The 1-form $p\dd x + q\dd y$ pull back to $\Sigma$ to be closed (since $\Upsilon_{1}$ vanishes on $\Sigma$) and hence exact, and therefore there exists a function $m : \Sigma \to \mathbb{R}$ such that $\dd m = p\dd x+q \dd y$ on $\Sigma$. We then have (at least locally), $m=M(x,y)$ on $\Sigma$ and, by its definition, we have $p=M_{x}(x,y)$ and $q=M_{y}(x,y)$ on the surface. Then fact that $\Upsilon_{2}$ vanishes when pulled back to $\Sigma$ implies that $M(x,y)$ satisfies the desired equation. 

Thus, we have encoded the given PDE as an exterior differential system on $\mathbb{R}^{4}$. Note, that we can make a change of variables on the open set where $q<0$: Set $y=qr$ and let $t=\frac{1}{2}q^{2}$. then, using these new coordinates on this domain, we have 
\begin{align*}
\Upsilon_{1}=\dd p \wedge \dd x + \dd t \wedge \dd r \quad \text{and} \quad \Upsilon_{2} = (r\dd p + \dd x)\wedge \dd t.
\end{align*}
Now, when we take an integral surface $\Sigma$ on these $2$-forms on which $\dd p \wedge \dd t$ is vanishing, it can be written locally as a graph of the form 
\begin{align*}
(p,t,x,r)=(p,t,u_{p}(p,t), u_{t}(p,t))
\end{align*}
(since $\Sigma$ is an integral of $\Upsilon_{1}$), where $u(p,t)$ satisfies $u_{t}+u_{pp}=0$ (since $\Sigma$ is an integral of $\Upsilon_{2}$). Thus, ``generically'' our PDE is equivalent to the backwards heat equation, up to a change of variables. Thus the function $M(x,y)$ can be parametrized as follows 
\begin{align}
&x = u_{p}\left(p,\frac{1}{2}q^{2} \right); \quad y=q u_{t}\left(p,\frac{1}{2}q^{2} \right); \nonumber \\
& M(x,y)=pu_{p}\left(p,\frac{1}{2}q^{2} \right) + q^{2}u_{t}\left(p,\frac{1}{2}q^{2} \right) - u\left(p,\frac{1}{2}q^{2}\right). \label{vyrm}
\end{align}
Note that  $y\geq 0,$ $q=M_{y} \leq 0$ then $u_{t}\left(p,\frac{1}{2}q^{2} \right)\leq 0$. 
Let us rewrite the conditions $M_{yy}\leq 0$ and $M_{xx}+\frac{M_{y}}{y}\leq 0$ in terms of $u(p,t)$. In other words we want $q_{y}$ and $p_{x}+\frac{q}{y} \leq 0$. We have 
\begin{align*}
0=u_{pp}p_{y} + u_{pt}qq_{y} \quad \text{and} \quad 1=q_{y} u_{t} + qp_{y} u_{tp}+q^{2}q_{y}u_{tt}.
\end{align*}
Then 
\begin{align*}
1=q_{y}u_{t}+q^{2} q_{y} \frac{u_{pt}^{2}}{u_{pp}}+q^{2}q_{y}u_{tt} \quad \text{and} \quad M_{yy}=q_{y}=\frac{u_{t}}{u_{t}^{2}-2t(u_{tt}u_{pp}-u_{pt}^{2})}.
\end{align*}
Thus the negative definiteness of the matrix (\ref{mdiff}) (if its determinant is known to be zero) is equivalent to 
\begin{align}\label{concavity}
u_{t}^{2}-2t\, \mathrm{det}(\mathrm{Hess}\, u)\geq 0. 
\end{align}

Let us show that the function $u(p,t)$ must satisfy a boundary condition:
\begin{align}\label{boundary}
u(f'(x),0)=xf'(x)-f(x) \quad \text{for} \quad x \in \Omega \quad \text{where} \quad f(x)=M(x,0).
\end{align}
Indeed, we know that $M(x,\sqrt{y})\in C^{2}(\Omega\times \mathbb{R}_{+})$ therefore $M_{y}(x,0)=0$. By choosing $y=0$ in (\ref{vyrm}), we have $q=0$, and we obtain the desired boundary condition: 
\begin{align*}
M(x,0)=xM_{x}(x,0)-u(M_{x}(x,0),0).
\end{align*}
\bigskip
Now it is clear how to find the function $M(x,y)$ provided that $M(x,0)$ is given:  First we try to find a function $u(p,t)$ such that 
\begin{align}
&u_{pp}+u_{t}=0, \quad u_{t} \leq 0, \label{heat} \\
&u(M_{x}(x,0),0)=xM_{x}(x,0)-M(x,0) \quad x \in \Omega, \label{um}\\
&u_{t}^{2}-2t\, \det (\mathrm{Hess}\, u) \geq 0. \label{hess}
\end{align}
Then a candidate  for $M(x,y)$ will be given  by (\ref{vyrm}). We should mention that if $M(x,0)$ is convex then  (\ref{um}) simply means that $u_{p}(p,0)$ is Legendre transform of $M_{x}(x,0)$. Indeed, if we take derivative in (\ref{um}) with respect to $x$ we obtain $u_{p}(M_{x}(x,0),0)=x$. 

\subsection{Back to the applications, old and new. Revisiting Section \ref{appl} with our new tool.}
\label{app12}
\label{goback} Further we assume that we know the expression $M(x,0)$  and we would like to restore the function $M(x,y)$ which satisfies conditions of Theorem~\ref{mth}, PDE~(\ref{degen})  and hence it gives us inequality (\ref{corlog}), or the error term in Jensen's inequality (see Section~\ref{conclusions} for the explanations). 

\subsubsection{Gross function}
In this case we have $M(x,0)=x \ln x$. Condition (\ref{um}) can be rewritten as follows $u(p,0)=e^{p-1}$ for all $p \in \mathbb{R}$. 
If we set $D=\frac{\partial^{2} }{\partial p^{2}}$ then 
\begin{align*}
u(p,t)=e^{-tD} e^{p-1}= \sum_{k=0}^{\infty}\frac{(-t)^{k}}{k!}e^{p-1}=e^{p-t-1} \quad \text{for all} \quad t\geq 0.
\end{align*}
Clearly $u(p,t)$ satisfies (\ref{hess}) because $\det(\mathrm{Hess}\, u)=0$.  Notice that we have $u_{t} < 0$,   
\begin{align*}
\begin{cases}
x=e^{p-\frac{q^{2}}{2}-1};\\
y=-qe^{p-\frac{q^{2}}{2}-1};
\end{cases}
\quad \text{then} \quad 
\begin{cases}
q=-\frac{y}{x};\\
p = \ln x +\frac{y^{2}}{2x^{2}}+1.
\end{cases}
\end{align*}
Therefore we obtain 
\begin{align*}
M(x,y)=xp+qy-u\left( p,\frac{1}{2}q^{2}\right)=x \ln x + \frac{y^{2}}{2x}+x-\frac{y^{2}}{x}-x=x\ln x - \frac{y^{2}}{2x}.
\end{align*}
 
\subsubsection{Nash's function} In this case we have $M(x,0)=x^{2}$.  Condition~(\ref{um}) takes the form $u(p,0)=\frac{p^{2}}{4}$ for all $p \in \mathbb{R}$. Then \begin{align*}
u(p,t)=e^{-tD} \frac{p^{2}}{4}=(1-tD) \frac{p^{2}}{4}=\frac{p^{2}}{4}-\frac{t}{2} \quad t\geq 0. 
\end{align*} 
$u(p,t)$ satisfies (\ref{hess}) because $\det (\mathrm{Hess}\, u)=0$. We have $u_{t} <0$ 
\begin{align*}
\begin{cases}
x=\frac{p}{2};\\
y=-\frac{q}{2};
\end{cases}
\quad \text{then} \quad 
\begin{cases}
p=2x;\\
q=-2y.
\end{cases}
\end{align*}
We obtain 
\begin{align*}
M(x,y)=2x^{2}-2y^{2}-(x^{2}-y^{2})=x^{2}-y^{2}. 
\end{align*}
\subsubsection{Bobkov's function.}
It is not clear at all where the function $M(x,y)=-\sqrt{I(x)^{2}+y^{2}}$ comes from. Apparently it was a pretty good guess.  

Let us show how easily it can be restored by solving Monge--Amp\`ere equation (\ref{Monge}). 
In this case we have $M(x,0)=-I(x)$. 
Condition~(\ref{um}) takes the form 
\begin{align}\label{granica}
u(p,0)=p\Phi(p)+\Phi'(p) \quad \text{for all}\quad   p \in \mathbb{R}. 
\end{align}
Now we will try to find the usual heat extension of $u(p,0)$ (call it $\tilde{u}(p,t)$)  which satisfies $\tilde{u}_{pp}=\tilde{u}_{t}$,  and then we try to consider the formal candidate $u(p,t) := \tilde{u}(p,-t)$.

 It is easier to find the heat extension of $\tilde{u}_{p}(p,0)$ and then take the antiderivative in $p$. Indeed, notice that (\ref{granica}) implies $u_{p}(p,0)=\Phi(p)$. the heat extension of $\Phi(p)$ is $\Phi\left(\frac{p}{\sqrt{1+2t}}\right)$. Indeed, the heat extension of the function $\mathbbm{1}_{(-\infty,0]}(p)$ at time $t=1/2$ is $\Phi(p)$. Then by the semigroup property the heat extension of $\Phi(p)$ at time $t$ will be the heat extension of $\mathbbm{1}_{(-\infty,0]}(p)$ at time $1/2+t$ which equals to $\Phi\left(\frac{p}{\sqrt{1+2t}}\right)$. Thus $\tilde{u}_{p}(p,t)=\Phi\left(\frac{p}{\sqrt{1+2t}}\right)$. Taking antiderivative in $p$ and using (\ref{granica}) if necessary we obtain 
\begin{align*}
\tilde{u}(p,t)=\sqrt{1+2t}\, \Phi'\left(\frac{p}{\sqrt{1+2t}}\right)+p\Phi\left(\frac{p}{\sqrt{1+2t}}\right).
\end{align*}
This expression is well defined even for $t \in (-1/2,0)$. Therefore if we set 
\begin{align*}
u(p,t) = \tilde{u}(p,-t)=\sqrt{1-2t}\, \Phi'\left(\frac{p}{\sqrt{1-2t}}\right)+p\Phi\left(\frac{p}{\sqrt{1-2t}}\right)\quad \text{for}\quad  p \in \mathbb{R}, \quad t\in \left[0,\frac{1}{2}\right), 
\end{align*}
direct computations show that $u(p,t)$ satisfies (\ref{heat}), (\ref{granica}) and (\ref{hess}) because $\det (\mathrm{Hess}\, u)=-\left( \frac{\Phi'(\frac{p}{\sqrt{1-2t}})}{1-2t} \right)^{2}<0$. We have $u_{t} =-\frac{\Phi'(\frac{p}{\sqrt{1-2t}})}{\sqrt{1-2t}}<0$ and $u_{p}=\Phi\left(\frac{p}{\sqrt{1-2t}}\right)$. Therefore,  
\begin{align*}
\begin{cases}
x=\Phi\left(\frac{p}{\sqrt{1-q^{2}}}\right);\\
y=\frac{-q}{\sqrt{1-q^{2}}} \Phi'(\frac{p}{\sqrt{1-q^{2}}});
\end{cases}
\quad \text{then} \quad 
\begin{cases}
\Phi^{-1}(x)=\frac{p}{\sqrt{1-q^{2}}};\\
y=\frac{-q}{\sqrt{1-q^{2}}}\Phi'(\Phi^{-1}(x)).
\end{cases}
\end{align*}
From the last equalities we obtain $M_{y}=q=-\frac{y}{\sqrt{I^{2}(x)+y^{2}}}$ and $M_{x}=p=\frac{I(x)\Phi^{-1}(x)}{\sqrt{I^{2}(x)+y^{2}}}$ where we remind that $I(x)=\Phi'(\Phi^{-1}(x))$. Then it follows that 
\begin{align*}
M(x,y)=-\sqrt{I^{2}(x)+y^{2}}.
\end{align*}

\subsubsection{Function 3/2}
In this case we have $M(x,0)=x^{3/2}$ for $x \geq 0$. It follows from (\ref{um}) that $u(p,0)=\frac{4}{27} p^{3}$ for $p \geq 0$. The solution of the backwards heat is the Hermite polynomial, i.e., we have $u(p,t)=\frac{4}{27}(p^{3}-6tp)$.  $u(p,t)$ satisfies (\ref{hess}) because $\mathrm{Hess}\, u <0$. Since $p \geq 0$ we have $u_{t} \leq 0$. Next we obtain 
\begin{align*}
\begin{cases}
&x = \frac{4}{9}(p^{2}-q^{2});\\
&y= -\frac{8}{9} pq.
\end{cases}
\quad \text{then} \quad 
\begin{cases}
p=\frac{3}{4}\sqrt{2x+2\sqrt{x^{2}+y^{2}}};\\
q=-\frac{3}{4}\sqrt{-2x+2\sqrt{x^{2}+y^{2}}}.
\end{cases}
\end{align*}
Finally 
\begin{align*}
M(x,y)=xp+qy - u \left(p, \frac{1}{2}q^{2}\right)= \frac{1}{\sqrt{2}}(2x - \sqrt{x^{2}+y^{2}})\sqrt{x+\sqrt{x^{2}+y^{2}}}.
\end{align*}

\subsubsection{Function $\arccos(x)$}\label{peculiar} Consider an increasing convex function
\begin{align*}
M(x,0)= x \arccos(-x)+\sqrt{1-x^{2}}.
\end{align*}
It follows from (\ref{um}) that $u(p,0)=-\sin(p)$ for $p \in [0,\pi]$. The solution of the backwards heat (\ref{heat}) becomes $u(p,t)=-e^{t} \sin(p)$. Notice that $u_{t} \leq 0$ for $p \in [0, \pi]$, and 
\begin{align*}
u_{t}^{2}-2t \det(\mathrm{Hess}\, u) = e^{2t}(2t+\sin^{2}(x)) \geq 0.
\end{align*}

Conditions (\ref{vyrm}) can be rewritten as follows 
\begin{align*}
&x = -e^{q^{2}} \cos(p);\\
&y = -qe^{q^{2}/2} \sin(p);\\
&M(x,y)=px+qy+e^{q^{2}/2}\sin(p) = px + qy - \frac{y}{q}, \quad x \in [-1,1], \quad y \geq 0.
\end{align*}
It follows that the negative number $q$ satisfies the equation 
\begin{align}\label{nnumber}
-q \sqrt{e^{q^{2}}-x^{2}}=y,
\end{align}
and $p = \arccos(-xe^{-q^{2}/2})$. Thus we obtain 
\begin{align*}
M(x,y)=x \arccos(-xe^{-q^{2}/2})+ (1-q^{2})\sqrt{e^{q^{2}}-x^{2}}
\end{align*}
where a negative number $q$ is the unique solution of (\ref{nnumber}). Thus we obtain 
\begin{align}
&\int_{\mathbb{R}^{n}}f \arccos(-f e^{-r/2})+(1-r) \sqrt{e^{r} -f^{2}} d\gamma \leq \label{f23} \\
&\left(\int_{\mathbb{R}^{n}} f d\gamma \right) \arccos\left(-\int_{\mathbb{R}^{n}} f \right) + \sqrt{1-\left(\int_{\mathbb{R}^{n}} f d\gamma \right)^{2}}\nonumber
\end{align}
for any smooth bounded $f :\mathbb{R}^{n} \to (-1,1)$ where $r>0$ solves the equation 
\begin{align*}
\| \nabla f\|^{2} = r (e^{r}-f^{2}).
\end{align*}

One can obtain Poincar\'e inequality from (\ref{f23}). Indeed,  take $f_{\varepsilon} = \varepsilon f$ and send $\varepsilon \to 0$. Notice that
\begin{align*}
&r = \varepsilon^{2} \| \nabla f\|^{2} + O(\varepsilon^{2});\\
&M\left( \int_{\mathbb{R}^{n}} f_{\varepsilon} d\gamma, 0 \right) = 1+ \frac{\pi}{2} \varepsilon \int_{\mathbb{R}^{n}} f d\gamma + \frac{1}{2} \left( \int_{\mathbb{R}^{n}} f d\gamma \right)^{2}\varepsilon^{2} + O(\varepsilon^{2});\\
&M(f_{\varepsilon}, \| \nabla f_{\varepsilon}\|) = 1 + \frac{\pi}{2} f \varepsilon + \frac{1}{2}\left( f^{2} - \| \nabla f\|^{2} \right)\varepsilon^{2}+O(\varepsilon^{2}).
\end{align*}  
Substituting these expressions into (\ref{f23}) and sending $\varepsilon \to 0$ we obtain Poincar\'e inequality.


\section{One dimensional case}\label{oned}
Let $n=1$, and set $\alpha=(\alpha_{0},\ldots, \alpha_{m})$ where $\alpha_{0}=0$, $\alpha_{1}=1, \ldots, \alpha_{n}=n$. Take ${\bf f} = \underbrace{(f,\ldots, f)}_{m+1}$, where $f \in C^{\infty}_{0}(\mathbb{R};\; \Omega)$.  Then $D^{\alpha}{\bf f} = (f,f',f'',\ldots, f^{(m)})$. Given a log-concave probability measure $e^{-U(x)}dx$ such that $U''(x) \geq R>0$,  the associated semigroup $P_{t}$ has the generator $L=d^{2}x-U'(x)dx$. Let ${\bf u}=(u_{0},\tilde{{\bf u}})$ where $\tilde{{\bf u}}=(u_{1},\ldots, u_{m})\in \mathbb{R}^{m}$ is arbitrary and $u_{0} \in \Omega$. Let the function $B(u_{0},\ldots, u_{m})\in C^{2}(\Omega\times \mathbb{R}^{m})$. Let $B_{j} := \frac{\partial B}{\partial u_{j}}$ and $B_{ij}:=\frac{\partial^{2} B}{\partial u_{i} \partial u_{j}}$. Set  
\begin{align*}
L_{j}({\bf u},y)=\sum_{k=j+1}^{m} \binom{k}{j} B_{k}({\bf u}) U^{(k-j+1)}(y)\quad \text{for}\quad  \quad j=0,\ldots, m-1
\end{align*}

\begin{remark}
Notice that if $e^{-U(x)}dx=\frac{1}{\sqrt{2\pi}}e^{-x^{2}/2}dx$ then $L_{j}({\bf u},y)=(j+1) B_{j+1}({\bf u}).$
\end{remark}
Further we assume that $B_{mm}\neq 0$. Theorem~\ref{absth} implies the following corollary:
\begin{corollary}\label{erti}
The following conditions are equivalent:
\begin{itemize}
\item[(i)]  For all ${\bf u} \in \Omega \times \mathbb{R}^{m}$ we have
\begin{align*}B_{mm}\leq 0, \quad \tilde{{\bf u}}\{B_{mj}({\bf u}) B_{mi}({\bf u})-B_{mm}({\bf u}) B_{ij}({\bf u})-\delta_{i-j} \frac{B_{mm}}{u_{j+1}} L_{j}({\bf u},y)\}_{i,j=0}^{m-1} \tilde{{\bf u}}^{T}\leq 0.
\end{align*}
\item[(ii)]  For all $f \in C^{\infty}_{0}(\mathbb{R}^{m};\; \Omega)$ and $ t \geq 0$ we have 
\begin{align*}
P_{t} B(f,f',\ldots, f^{(m)}) \leq B(P_{t} f, P_{t} f', \ldots, P_{t} f^{(m)}).
\end{align*}
\end{itemize}
\end{corollary}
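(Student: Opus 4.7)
The plan is to apply Theorem~\ref{absth} to the special setting of the corollary and make all ingredients explicit. After the computations, the pointwise inequality in Theorem~\ref{absth}(i) becomes a quadratic in the single derivative $f^{(m+1)}(y)$ that does \emph{not} appear among the entries of $\mathbf{u}=(f,f',\ldots,f^{(m)})$. Since $f^{(m+1)}(y)$ can be prescribed independently of $\mathbf{u}$ and $y$ by choosing $f$, nonpositivity of the quadratic reduces to $B_{mm}\leq 0$ together with a discriminant inequality, and the latter is exactly the quadratic form in $\tilde{\mathbf{u}}$ written in (i).

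\textbf{Step 1: the commutator.} With $L=\partial^2-U'\partial$, Leibniz and the cancellation of the $k=0$ term give
\begin{align*}
[L,D^{j}]f \;=\; \sum_{k=1}^{j}\binom{j}{k}\, U^{(k+1)}(y)\, f^{(j+1-k)}(y).
\end{align*}
Substituting $\ell=j+1-k$ and interchanging the order of summation in $\sum_{j=1}^{m} B_{j}\,[L,D^{j}]f$ matches, after relabeling, the definition of $L_{j}(\mathbf{u},y)$, yielding
\begin{align*}
\sum_{j=1}^{m} B_{j}(\mathbf{u})\,[L,D^{j}]f \;=\; \sum_{j=0}^{m-1} u_{j+1}\, L_{j}(\mathbf{u},y).
\end{align*}
This combinatorial identification is the only slightly subtle point in the proof.

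\textbf{Step 2: reduction to a quadratic.} Since $n=1$ we have $\Gamma_{ij}(D^{\alpha}\mathbf{f})=f^{(i+1)}f^{(j+1)}$, so the left-hand side of Theorem~\ref{absth}(i) evaluated at $y$, with $\xi:=f^{(m+1)}(y)$, becomes
\begin{align*}
B_{mm}(\mathbf{u})\,\xi^{2} + 2\xi \sum_{j=0}^{m-1} B_{mj}(\mathbf{u})\,u_{j+1} + \sum_{i,j=0}^{m-1} B_{ij}(\mathbf{u})\,u_{i+1}u_{j+1} + \sum_{j=0}^{m-1} u_{j+1}\,L_{j}(\mathbf{u},y).
\end{align*}
At a single point $y$ the values $f(y),f'(y),\ldots,f^{(m+1)}(y)$ can be prescribed independently (subject only to $f(y)\in\Omega$), so Theorem~\ref{absth}(i) is equivalent to this quadratic in $\xi$ being $\leq 0$ for every $\xi\in\mathbb{R}$, every $\mathbf{u}\in\Omega\times\mathbb{R}^{m}$ and every $y$.

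\textbf{Step 3: discriminant and assembly.} Under $B_{mm}\neq 0$, the quadratic is nonpositive on $\mathbb{R}$ iff $B_{mm}\leq 0$ and
\begin{align*}
\left(\sum_{j=0}^{m-1} B_{mj}u_{j+1}\right)^{\!2} - B_{mm}\!\left[\sum_{i,j=0}^{m-1} B_{ij}\,u_{i+1}u_{j+1} + \sum_{j=0}^{m-1} u_{j+1}L_{j}(\mathbf{u},y)\right]\leq 0.
\end{align*}
Rewriting the single sum as $\sum_{i,j=0}^{m-1} \delta_{i-j}\,\frac{u_{i+1}u_{j+1}}{u_{j+1}}\,L_{j}(\mathbf{u},y)$ packages all three contributions into precisely the quadratic form $\tilde{\mathbf{u}}\{B_{mi}B_{mj}-B_{mm}B_{ij}-\delta_{i-j}\frac{B_{mm}}{u_{j+1}}L_{j}\}\tilde{\mathbf{u}}^{T}$ of condition (i). Since (ii) of the corollary is a verbatim specialization of Theorem~\ref{absth}(ii) under the chosen $\alpha$ and $\mathbf{f}$, the equivalence of the two corollary conditions follows from Theorem~\ref{absth}. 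The main obstacle is the bookkeeping of Step~1; once the sum is identified with $L_{j}$, Steps~2--3 are completion of squares in one variable.
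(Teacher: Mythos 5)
Your proof is correct and follows essentially the same route as the paper: specialize Theorem~\ref{absth} to $\alpha=(0,1,\ldots,m)$, compute the commutator $[L,D^k]$, observe that $\Gamma(D^\alpha\mathbf f)_{ij}=f^{(i+1)}f^{(j+1)}$, and note that the resulting pointwise inequality is a quadratic in the free variable $u_{m+1}=f^{(m+1)}(y)$, which can be prescribed independently of $\mathbf u$ and $y$. The one place you add detail that the paper omits is the final discriminant step: the paper simply asserts that the quadratic
\begin{align*}
B_{mm}u_{m+1}^{2}+2u_{m+1}\sum_{j=0}^{m-1}B_{mj}u_{j+1}+\sum_{i,j=0}^{m-1}B_{ij}u_{i+1}u_{j+1}+\sum_{j=0}^{m-1}u_{j+1}L_{j}(\mathbf u,y)
\end{align*}
is nonpositive for all $u_{m+1}$ iff condition (i) holds, whereas you explicitly complete the square and repackage the affine term via $\delta_{i-j}u_{i+1}/u_{j+1}$ into the quadratic form of (i). Your Step~1 re-indexing that identifies $\sum_{k}B_k[L,D^k]f$ with $\sum_j u_{j+1}L_j(\mathbf u,y)$ is a harmless reformulation of the paper's double sum; the paper leaves that sum unfolded. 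No gaps.
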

\begin{remark}
If we send $t \to \infty$ then (ii) in the corollary implies an inequality
\begin{align*}
\int_{\mathbb{R}}B(f,f',\ldots, f^{(m)}) d\mu(x) \leq B\left( \int_{\mathbb{R}} fd\mu, 0,\ldots, 0\right) \quad\text{for all} \quad f \in C^{\infty}_{0}(\mathbb{R}^{m};\; \Omega).
\end{align*}
\end{remark}
\begin{proof}
It is enough to show that (i) in Corollary~\ref{erti} is the same as (i) in Theorem~\ref{absth}. Notice that 
\begin{align*}
[L,D^{\alpha_{0}}]=0\quad \text{and}\quad  \quad [L,D^{\alpha_{k}}]=\sum_{\ell=1}^{k}\binom{k}{\ell}U^{(\ell+1)}(x)d^{k+1-\ell}x \quad \text{for}\quad 1\leq k \leq m.
\end{align*}
Thus 
\begin{align*}
\nabla B [L,D^{\alpha}]{\bf f}=\sum_{k=1}^{n}\frac{\partial B}{\partial u_{k}}\left(\sum_{\ell=1}^{k}\binom{k}{\ell}U^{(\ell+1)}(x)(P_{t}f)^{k+1-\ell} \right)
\end{align*}
and 
\begin{align*}
\Gamma(D^{\alpha}f)=
\begin{bmatrix}
    g'\cdot g'       & g' \cdot g''  & \dots & g'\cdot g^{(m+1)}  \\
    g''\cdot g'      & g''\cdot g'' & \dots & g''\cdot g^{(m+1)} \\
    \hdotsfor{4} \\
    g^{(m+1)}\cdot g'   &g''\cdot g^{(m+1)} & \dots &g^{(m+1)}\cdot g^{(m+1)}  
\end{bmatrix}
\end{align*}

Therefore quantity (i) in Theorem~\ref{absth} takes the following form 
\begin{align*}
\sum_{k=1}^{m} B_{k}(\s{u})\left[ \sum_{\ell=1}^{k}\binom{k}{\ell} U^{(\ell+1)}(y)u_{k+1-\ell}\right]+\sum_{i,j=0}^{m}B_{ij}(\s{u})u_{i+1}u_{j+1}
\end{align*}
where $u_{1},\ldots, u_{n+1},y$ are arbitrary  real numbers and $u_{0}$ takes values in $\Omega$. Notice that the above expression can be rewritten as follows 
\begin{align*}
B_{mm}u_{m+1}^{2}+2u_{m+1}\left(\sum_{j=0}^{m-1}B_{mj}u_{j+1}\right)+\sum_{i,j=0}^{m-1}B_{ij}u_{i+1}u_{j+1}+ \sum_{k=1}^{m} B_{k}(\s{u})\left[ \sum_{\ell=1}^{k}\binom{k}{\ell} U^{(\ell+1)}(y)u_{k+1-\ell}\right]
\end{align*}

 This expression is nonpositive if and only if  condition (i) of Corollary~\ref{erti} holds. 
\end{proof}
 
 \section{Further applications}\label{further}
 
Houdr\'e--Kagan~\cite{Hodre} obtained an extension of the classical Poincar\'e inequality: 
\begin{align}\label{hoho}
\sum_{k=1}^{2d}\frac{(-1)^{k+1}}{k!}\int_{\mathbb{R}^{n}} \| \nabla^{k} f\|^{2} d\gamma \leq \int_{\mathbb{R}^{n}} f^{2} d\gamma - \left(\int_{\mathbb{R}^{n}} f\right)^{2} \leq \sum_{k=1}^{2d-1}\frac{(-1)^{k+1}}{k!}\int_{\mathbb{R}^{n}}\| \nabla^{k} f\|^{2} d\gamma
\end{align}
for all compactly supported functions $f$ on $\mathbb{R}^{n}$, and any $d\geq 1$.  Here by symbol $\|\nabla^{k} f\|$ we denote 
\begin{align*}
\| \nabla^{k} f\|^{2} =\sum_{|\alpha| = k}  (D^{\alpha} f)^{2}.
\end{align*}
We refer the reader to~\cite{ionel} for further remarks on (\ref{hoho})  in one dimensional case $n=1$. A remarkable paper \cite{ledouxparts} explains (\ref{hoho}) via integration by parts. 

We will illustrate the use of Corollary~\ref{erti} on (\ref{hoho}) in case $n=1$.

\bigskip

{\em Proof of (\ref{hoho}) in case $n=1$.}
 
Consider 
\begin{align*}
B(u_{0},u_{1},\ldots, u_{m})=\sum_{k=0}^{m} \frac{(-1)^{k}}{k!} u_{k}^{2},
\end{align*}
and $d\mu=\frac{1}{\sqrt{2\pi}}e^{-x^{2}/2}dx$. If $m$ is odd then $B_{mm}\leq 0$ and condition (i) of Corollary~\ref{erti} holds. Indeed, in this case 
$L_{j}({\bf u},y)=B_{j+1}({\bf u})(j+1)=u_{j+1} (-1)^{j+1} \frac{2}{j!}$, and 
\begin{align*}
 &\tilde{{\bf u}}\{B_{mj}({\bf u}) B_{mi}({\bf u})-B_{mm}({\bf u}) B_{ij}({\bf u})-\delta_{i-j} \frac{B_{mm}}{u_{j+1}} L_{j}({\bf u},y)\}_{i,j=0}^{m-1} \tilde{{\bf u}}^{T}=\\
 &-B_{mm}\tilde{{\bf u}}\left\{ B_{jj}+B_{j+1}\frac{j+1}{u_{j+1}}\right\}_{i,j=0}^{m-1}\tilde{{\bf u}}^{T}=0.
\end{align*}

Thus by (ii) of Corollary~\ref{erti} we obtain  that for all $f\in C^{\infty}_{0}(\mathbb{R})$
\begin{align*}
\int_{\mathbb{R}} \sum_{k=0}^{m} \frac{(-1)^{k}}{k!} [f^{(k)}(x)]^{2} d\mu \leq \left(\int_{\mathbb{R}} f(x) d\mu \right)^{2}
\end{align*}
for odd $m$, and similarly we obtain the opposite inequality for even $m$. 

\bigskip

{\em Proof of (\ref{cfm1}) (Banaszczyk conjecture)}. We will show that {\em if $f$ is even and $d\mu=e^{-U(x)}dx$ is an even log-concave measure such that $\mathrm{Hess}\, U \geq Id$ then 
\begin{align}\label{coridor}
\left( \int_{\mathbb{R}^{n}} f^{2} d\mu \right)^{2} - \left(\int_{\mathbb{R}^{n}} fd\mu \right)^{2} \leq \frac{1}{2}\int_{\mathbb{R}^{n}} \| \nabla f\|^{2} d\mu.
\end{align}
}
Indeed, take $M(x,y)$ as in (\ref{f5}) i.e., 
\begin{align*}
M(x,y)=x^{2}-\frac{y^{2}}{2} \quad \text{for} \quad x \in \mathbb{R}, \quad y \geq 0. 
\end{align*}
Unfortunately $M(x,y)$ does not satisfy (\ref{mdiff}) (because $M_{xx}+M_{y}/y=1>0$) therefore we cannot directly apply Theorem~\ref{mth}. 

Let $P_{t}$ be the associated semigroup to $d\mu$ and let $L$ be its generator. Consider the function $V(x,t)=P_{t}M(f,\| \nabla f\|)-M(P_{t} f, \|\nabla P_{t} f \|)$ as in the proof of Theorem~\ref{absth}. Then 
\begin{align*}
&(\partial_{t} -L) V(x,t)= -\nabla P_{t} f (\mathrm{Hess}\, U) (\nabla P_{t} f)^{T} + 2\| \nabla P_{t} f\|^{2} - \| \nabla^{2} P_{t} f\|^{2}\leq \\
&\| \nabla P_{t} f (x) \|^{2} - \| \nabla^{2} P_{t} f\|^{2}.
\end{align*}
Clearly it is not true that the above expression is pointwise i.e., for all $x \in \mathbb{R}^{n}$, non positive (consider $t=0$).  Therefore we cannot directly apply maximum principle as in the proof of Theorem~\ref{absth} in order to get pointwise bound  $V(x,t) \leq  0$. Actually we do not need pointwise estimate $V(x,t)\leq  0$ in order to get (\ref{coridor}), for example $\int_{\mathbb{R}^{n}} V(x,t) d\mu \leq  0$ will be enough. Notice that 
\begin{align*}
\int_{\mathbb{R}^{n}} V(x,T) d\mu =\int_{0}^{T} \int_{\mathbb{R}^{n}}  (\partial_{t} -L) V(x,t)d\mu dt   \leq \int_{0}^{T} \int_{\mathbb{R}^{n}} \| \nabla P_{t} f (x) \|^{2} - \| \nabla^{2} P_{t} f\|^{2} d\mu ds\leq 0
\end{align*} 
for all $T\geq 0$. The last inequality follows from the   application of Poincar\'e inequality (\ref{poin}) to the functions $\partial_{x_{j}} P_{t} f(x)$ for all $j=1,\ldots, n$,  and the fact that $ \int_{\mathbb{R}^{n}} \partial_{x_{j}} P_{t} f(x) d\mu=0$ because $P_{t} f(x)$ is even function. Thus we obtain that 
\begin{align*}
\int_{\mathbb{R}^{n}} M(f,\|\nabla f\|) d\mu \leq \int_{\mathbb{R}^{n}}M(P_{T}f, \| \nabla P_{T}f\|)d\mu \quad \text{for all} \quad T\geq 0. 
\end{align*}
By sending $T\to \infty$ we arrive at (\ref{coridor}) because $\lim_{T\to \infty}\|\nabla P_{T} f\| = 0$. 

$\hfill \square$ 

In the end we should mention that even though the current paper is self-contained it should be considered as a continuation of the ideas developed in our recent papers~\cite{pivo, pivo1} where  similar  to (\ref{mdiff}) PDEs happen to rule some functional inequalities.

\end{document}